\documentclass{amsart}

\usepackage{amsmath,amsthm,amscd,amssymb}
\usepackage{mathrsfs}
\usepackage{graphicx}

\usepackage{hyperref}
\usepackage{xcolor}
\hypersetup{
colorlinks=green,
citebordercolor=green,
linkbordercolor=green,
urlbordercolor=green
}

\usepackage{enumerate}
\usepackage{yhmath}
\usepackage{color}
\usepackage{lineno}

\theoremstyle{plain}
\newtheorem{thm}{Theorem}[section]

\newtheorem{lem}[thm]{Lemma}

\newtheorem{mtheorem}{Theorem}

\newtheorem{mcorollary}[mtheorem]{Corollary}

\theoremstyle{definition}
\newtheorem{dfn}[thm]{Definition}

\newtheorem{question}[thm]{Question}
\theoremstyle{remark}
\newtheorem{remark}[thm]{Remark}

\numberwithin{equation}{section}
\usepackage{bm}

\newcommand{\la}{\Lambda}

\newcommand{\D}{D}

\def\smfd{W^{\mathrm{s}}}
\def\umfd{W^{\mathrm{u}}}

\def\Diff{\mathrm{Diff}}

\def\vv{\mathbb{V}}
\def\xx{\mathbb{X}}
\def\yy{\mathbb{Y}}

\def\uu{\mathbb{U}}
\def\bx{{\boldsymbol{x}}}
\def\by{{\boldsymbol{y}}}
\def\bv{{\boldsymbol{v}}}

\title[A robust obstruction to full strong pluripotency]{A robust obstruction to full strong pluripotency for wild blender-horseshoes}

\date{\today}

\author{Shin Kiriki}
\address[Shin Kiriki]{Department of Mathematics, Tokai 
University, 4-1-1 Kitakaname, Hiratsuka, Kanagawa, 259-1292, JAPAN}
\email{kiriki@tokai.ac.jp}

\author{Xiaolong Li}
\address[Xiaolong Li]{School of Mathematics and Statistics, 
Huazhong University of Science and Technology, Luoyu Road 1037, Wuhan, 430074, CHINA}
\email{lixl@hust.edu.cn}

\author{Yushi Nakano}
\address[Yushi Nakano]{
Department of Mathematics, Hokkaido University, Kita 10, Nishi 8, Kita-Ku, Sapporo, Hokkaido, 060-0810, 
JAPAN}
\email{yushi.nakano@math.sci.hokudai.ac.jp}

\author{Teruhiko Soma}
\address[Teruhiko Soma]{Department of Mathematical Sciences, 
Tokyo Metropolitan University, 1-1 Minami-Osawa, Hachioji, Tokyo, 192-0397, JAPAN}
\email{tsoma@tmu.ac.jp}

\subjclass[2020]{Primary: 37C20, 37C29; Secondary: 37C25}

\keywords{Takens' Last Problem, strong pluripotency, empirical measure, blender-horseshoe, homoclinic tangency}

\begin{document}

\begin{abstract}
Suppose that $M$ is a closed manifold of dimension greater than two and $r\geq 2$.
We show that there exists a $C^r$-diffeomorphism $f:M\longrightarrow M$ with a wild affine blender-horseshoe $\Lambda_f$ which is 
$C^r$-robustly and strongly pluripotent for $\Lambda_f^{(\mathrm{mj})}$ but not for $\Lambda_f$, where $\Lambda_f^{(\mathrm{mj})}$ is the subset of $\Lambda_f$ consisting of  elements with the majority condition.
Thus, within the present affine blender-horseshoe family in \cite{KNS}, there is a robust obstruction to extending the strong pluripotency from $\Lambda_f^{(\mathrm{mj})}$ to the whole horseshoe.
\end{abstract}

\maketitle

\section{Introduction}
Pluripotency is a term widely used in physiology and related fields to describe the ability of a system to transition from an undifferentiated state to multiple differentiated states governed by its internal dynamics.
In \cite{KNS}, this notion is formulated from the viewpoint of dynamical systems.
Specifically, it realizes some orbits initiating from a hyperbolic invariant set $\Lambda$ 
stochastically and observably under small perturbations of the dynamics.
We say that the system has full pluripotency if the dynamics realizes \emph{all} such orbits.
In this paper, we are interested in the subject whether given dynamics have full pluripotency or not.

Suppose that $M$ is a closed Riemannian manifold and $\mathrm{Diff}^r(M)$ $(r\geq 1)$ is the space of 
$C^r$-diffeomorphisms on $M$ with $C^r$-topology.
For any $f\in \mathrm{Diff}^r(M)$ and $x\in M$, consider the sequence of empirical  measures 
\begin{equation}\label{eqn_empirical}
\delta^{n}_{x,f}=
\frac{1}{n}\sum_{i=0}^{n-1}\delta_{f^i(x)},
\end{equation}
where $\delta_{f^i(x)}$ is the Dirac measure on $M$ supported at $f^i(x)$.
Let $\mathcal{P}(M)$ be the space of Borel probability measures on $M$ with weak$^*$-topology.
The notion of historic behavior is  introduced by Ruelle \cite{R01}.
We say that, for  $x\in M$, the $f$-forward orbit of $(f^n(x))_{n\geq 0}$ has 
\emph{historic behavior} if the sequence of empirical measures \eqref{eqn_empirical} does not converge in $\mathcal{P}(M)$.
A subset of $M$ is called a \emph{historic set} or a \emph{set with historic behavior} if it consists of initial points which give rise to orbits with historic behavior.
Note that the maximal historic set is Lebesgue measurable, see \cite[Proposition A.1]{KLNS22}.

The following problem proposed by Takens \cite{T08} is the main motivation of 
our study, which is nowadays called Takens' Last Problem 
(abbreviated as TLP for short).

\medskip
\emph{Are there persistent classes of smooth dynamical systems such that the maximal historic 
set has positive Lebesgue measure?}
\medskip

Here a non-empty subset $\mathcal{C}$ of $\mathrm{Diff}^r(M)$ is called a \emph{class}.
A property $\mathbf{P}$ on elements of $\mathrm{Diff}^r(M)$ is $\mathcal{C}$-\emph{persistent} or \emph{persistent with respect to} 
$\mathcal{C}$ if any elements of $\mathcal{C}$ satisfy the property.
Moreover, if the class $\mathcal{C}$ has non-empty interior, then $\mathbf{P}$ 
is called a $C^r$-\emph{robust} property of $\mathrm{Diff}^r(M)$.
TLP asks the existence of an observable historic set, 
where a subset of $M$ is called \emph{observable} if it has positive Lebesgue measure. 
An example of 2-dimensional dynamics with observable historic set is given by 
Colli-Vargas \cite{CV01}.
Their model is important in the sense that it provides a template for constructing 
diffeomorphisms with observable historic sets of special type, i.e.\ historic contracting wandering domains.
Here a non-empty connected open subset $D$ of $M$ is called a  \emph{wandering domain} 
if, for any $m,n\in \mathbb{N}\cup \{0\}$ $(m\neq n)$, 
$f^m(D)\cap f^n(D)=\emptyset$.
In fact, we present in \cite{KNS23} a class $\mathcal{C}$ of diffeomorphisms of dimension three or more with wild affine blender-horseshoes such that each element of $\mathcal{C}$ has a 
contracting wandering domain with historic behavior.
Here a blender-horseshoe $\Lambda$ is called \emph{wild} 
if $\umfd(\Lambda)$ and $\smfd(\Lambda)$ have 
a homoclinic tangency in the closure of the superposition region of $\Lambda$, 
see \cite[Section 6.2]{BDV05} for such a region.
By incorporating the Colli-Vargas model into any 2-dimensional diffeomorphism in the 
dissipative Newhouse domain $\mathcal{N}$ in $\mathrm{Diff}^r(M)$, it is shown 
in \cite{KS17} that there exists a dense subset $\mathcal{C}$ of $\mathcal{N}$ 
each element of which admits a contracting wandering domain with historic behavior.
This is an affirmative answer to TLP relative to a dense subset of $\mathcal{N}$.
Subsequently, other answers to TLP relative to certain persistent classes 
are given by Labouriau-Rodrigues \cite{LR17}, Barrientos \cite{B22}, Berge-Biebler \cite{BB23}, 
Jamilov-Mukhamedov \cite{JM} and so on.

Taking one more step beyond TLP, we will study detailed behavior of orbits from statistical or more strongly  geometrical points of view. 
Arguments on the Colli-Vargas model $f_0$ in \cite{CV01} implicitly imply that the geometrical average of orbits 
initiating any element of the whole horseshoe $\Lambda_{f_0}$ is realized observably by orbits of a diffeomorphism arbitrarily $C^r$-close to $f_0$, that is, 
$f_0$ has full strong pluripotency.
Moreover, it is shown in \cite{KLNSV} that any diffeomorphisms $f$ sufficiently close to 
$f_0$ have the the same property for the continuation $\Lambda_f$ of $\Lambda_{f_0}$.
This means that the strong full pluripotency is a $C^r$-robust property in 2-dimensional dynamics.
In the three or more dimensional case, the model given in \cite{KNS23} has 
strong pluripotency for the subset $\Lambda_f^{(\mathrm{mj})}$ of $\Lambda_f$ 
consisting of elements with the majority condition, 
see \eqref{eqn_mj_cond} in the next section for the definition.
Furthermore it is shown in \cite{KNS} that the strong pluripotency for $\Lambda_f^{(\mathrm{mj})}$ is $C^r$-robust.
The aim of this paper is to present three or more dimensional dynamics which are 
$C^r$-robustly and strongly pluripotent for $\Lambda_f^{(\mathrm{mj})}$ but not for $\Lambda_f$.

\section{Pluripotency and main theorem}

As in Introduction, suppose that $M$ is a closed Riemannian manifold and $r\geq 1$.
We consider the \emph{first Wasserstein metric} $d_W$ on the space $\mathcal{P}(M)$ of Borel probability measures on $M$ defined as
\[
d_W(\mu ,\nu )= \sup_\varphi \left\vert \int _M \varphi \, d\mu - \int _M \varphi \, d\nu \right\vert
\]
for $\mu$, $\nu\in \mathcal{P}(M)$, 
where the supremum is taken for all Lipschitz functions $\varphi : M \longrightarrow [-1, 1]$ 
with Lipschitz constant bounded by $1$. 
Note that the metric $d_W$ is compatible with the weak$^*$-topology on $\mathcal{P}(M)$.

Now we recall the concepts of pluripotency and strong pluripotency introduced in \cite{KNS}.

\begin{dfn}[Pluripotency]\label{dfn1}
Let $\la_f $ be a uniformly hyperbolic compact invariant set for $f \in \Diff^{r}(M)$.
\begin{enumerate}[(1)] 
\item $f$  is \emph{pluripotent} for  a subset $\la_f'$ of $\la_f$ if, for any $x\in \la_f'$,  
there exist $g \in \Diff^r(M)$ arbitrarily $C^r$-close to $f$ and 
a positive Lebesgue measure subset $\D_g$ of $M$ such that, 
for any $y\in \D_g$, 
\begin{equation}\label{defpl}
\lim _{n\to \infty} d_W( \delta_{y,g}^n, \delta_{x_g,g}^n ) =0,
\end{equation}
where $x_g\in \la_g'$ is the continuation of $x\in \la_f'$. 
\item
$f$ is \emph{strongly pluripotent} for $\la_f'$ if the 
following condition holds instead of \eqref{defpl} for $g$ and $D_g$ as above.
\begin{equation}\label{defspl}
\lim _{n\to \infty} 
\frac{1}{n}\sum_{i=0}^{n-1}
\sup_{y\in \D_{g}}\{\mathrm{dist}(g^i(y),g^i(x_{g}))\}=0.
\end{equation}
\end{enumerate}
Moreover such an $f$ is said to be \emph{fully} (strongly) pluripotent if $\la_f'$ is equal to the whole 
invariant set $\la_f$.
\end{dfn}
We note that \eqref{defspl} implies \eqref{defpl}, while the converse is not true in 
general.
For example, see \cite[Theorem 1.8]{KNS}.

In this paper we consider the case of $\dim M\geq 3$.
In \cite[Theorem B]{KNS}, we showed that 
there exists a neighborhood $\mathcal{U}_0$ of a diffeomorphism $f_0$ in $\mathrm{Diff}^r(M)$ with a wild affine blender-horseshoe 
such that any element $f$ of $\mathcal{U}_0$ has the continuation $\Lambda_f$ of 
$\Lambda_{f_0}$ and is strongly pluripotent for 
the subset $\Lambda_f^{(\rm mj)}$ of 
$\Lambda_f$ consisting of elements with the majority condition.
Here we say that an element $x$ of $\Lambda_f$ satisfies the \emph{majority condition} if 
\begin{equation}\label{eqn_mj_cond}
\liminf_{n\to\infty}\frac{
\#\left\{
j\ ;\ 
n- (3n)^{2/3} \leq j\leq n,\ 
v_{j}=0
\right\}
}{(3n)^{2/3}}\geq \frac12,
\end{equation}
where 
$(v_j)_{j\in \mathbb{Z}}\in \{0,1\}^{\mathbb{Z}}$ is the binary code corresponding to $x$.
Since $\Lambda_f^{({\rm mj})}$ is dense in but not equal to $\Lambda_f$, the following question  is natural.

\medskip
\emph{
Does there exist a diffeomorphism $f$ arbitrarily $C^r$-close to $f_0$ 
as  above and strongly pluripotent for the whole $\Lambda_{f}$?}
\medskip

The following theorem gives a negative answer to this question.

\begin{mtheorem}\label{thm_A}
Let $M$ be a closed manifold of dimension greater than two. 
Then there exist an element $f_0$ of $\mathrm{Diff}^r(M)$ $(r\geq 1)$ with an affine blender-horseshoe $\Lambda_{f_0}$ and a $C^r$-neighborhood $\mathcal{U}_0$ of $f_0$ satisfying the following condition. 
Any element $f$ of  $\mathcal{U}_0$ is not strongly pluripotent 
for the continuation $\Lambda_f$ of the whole $\Lambda_{f_0}$.
\end{mtheorem}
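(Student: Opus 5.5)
The plan is to exhibit a single point $x\in\Lambda_f$, defined uniformly for all $f\in\mathcal{U}_0$, for which strong pluripotency fails for every $g$ near $f$. The natural candidate is the continuation $p_f$ of the hyperbolic fixed point of the blender-horseshoe with code $v_j\equiv 1$; this point maximally violates the majority condition \eqref{eqn_mj_cond}. I take $f_0$ to be the affine blender-horseshoe model of \cite{KNS}, but choose the affine parameters so that the branch coded by $1$ has strong contraction and weak expansion. Concretely, on the cube $\mathbb{B}$ the map restricted to the $1$-branch is $(x,y,z)\mapsto(\lambda_1x+a,\ \sigma y+b,\ \mu z+c)$ with $0<\lambda_1<1<\sigma,\mu$, and I choose the parameters so that $\lambda_1\max\{\sigma,\mu\}<1$ with a definite margin. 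This is a dissipativity (volume-contracting) condition along the $1$-branch, and it is open, so it persists for all $f$ in a small $C^r$-neighborhood $\mathcal{U}_0$. The neighborhood $\mathcal{U}_0$ should be small enough that every $g$ $C^r$-close to some $f\in\mathcal{U}_0$ still lies in a slightly larger neighborhood where the same hyperbolic estimates hold.

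Argue by contradiction. Suppose that, for $x=p_f$, there are $g$ arbitrarily close to $f$ and a set $D_g$ with $\mathrm{Leb}(D_g)>0$ satisfying \eqref{defspl}. Put $\varepsilon_i=\sup_{y\in D_g}\mathrm{dist}(g^i(y),p_g)$. Then $\frac1n\sum_{i<n}\varepsilon_i\to0$, so for any small $\delta>0$ the set $I_\delta=\{i;\ \varepsilon_i<\delta\}$ has upper (indeed lower) density $1$. For $i\in I_\delta$, the whole image $g^i(D_g)$ lies in the $\delta$-ball $U_\delta$ about the saddle $p_g$. The key quantitative step is to control the volume of $g^i(D_g)$. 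Inside $U_\delta$ the map $g$ contracts volume by a factor at most $\kappa<1$, where $\kappa$ is close to $\lambda_1\sigma\mu$ or whatever appropriate Jacobian bound the dissipativity gives. Outside $U_\delta$, $|\det Dg|\le K$ for a uniform constant $K$. The number of $i<n$ with $i\notin I_\delta$ is $o(n)$, but that alone does not control $K^{o(n)}$ against $\kappa^{n}$. So I instead use the geometry directly: a set contained in $U_\delta$ at two consecutive times $i,i+1\in I_\delta$ has its $g$-image computed within the linearizing chart. Long runs of consecutive times in $I_\delta$ therefore give volume contraction by $\kappa^{\text{run length}}$. Excursions out of $U_\delta$ are short on average, and their volume growth is bounded by $K^{\text{excursion length}}$. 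Since the total excursion length is $o(n)$ while runs have total length $n-o(n)$, we get $\mathrm{Leb}(g^n(D_g))\le C\kappa^{n(1-o(1))}K^{o(n)}\to0$.

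This estimate alone does not contradict anything, since images of wandering domains may shrink. The contradiction must instead come from the fact that $D_g$ lies within $\varepsilon_i$ of an orbit for all $i$. I therefore rather use the unstable direction. Since $\varepsilon_i\to0$ along density-one times, and $D_g$ has positive measure, by Lebesgue density I can pick a small curve in $D_g$ transverse to $W^{s}_{\mathrm{loc}}$ of the relevant points. Its length grows under $g$ along the expanding directions $\sigma,\mu>1$ whenever it stays near $p_g$, so on long runs inside $U_\delta$ its diameter would exceed $\delta$, contradicting $\varepsilon_i<\delta$. Hence $D_g$ must be essentially contained in a local stable set of a single orbit. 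Such a set is contained in $\bigcup_n g^{-n}(W^s_{\mathrm{loc}}(p_g))$ up to the sparse excursion times, and it has measure zero, contradicting $\mathrm{Leb}(D_g)>0$.

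The main obstacle is exactly this last step: handling the $o(n)$ exceptional times, where the orbit may leave $U_\delta$ and where a $C^r$-perturbation $g$ (which may create tangencies and folding) could shrink the unstable extent back. I would first try to show that an excursion from a $\delta$-neighborhood of $p_g$ that returns within $o(n)$ times cannot fold a curve of length $\ell\ll\delta$ by more than a uniform factor. Then I would need the dissipativity margin in the $1$-branch (why I impose $\lambda_1\max\{\sigma,\mu\}<1$ strictly with room) to be robust and fixed-point specific, so that the argument holds uniformly over $\mathcal{U}_0$.
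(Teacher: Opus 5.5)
There is a genuine gap, and it is the step you yourself flag as the main obstacle. The bound you hope for, that an excursion shrinks the unstable extent of a short curve by at most a uniform factor, is false. So an argument using only hyperbolicity near the fixed point plus density-zero excursions cannot close.

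The model contains a fold. On $\mathbb{X}_2$ the map $f_0^2$ acts on the unstable coordinate by $x\mapsto -a_1(x-\frac12)^2+a_2(z+\mu)$. A curve of length $\ell$ crossing $x=\frac12$ therefore comes out with unstable extent of order $a_1\ell^2$, the rest being turned into the contracting $cs$-direction. The loss factor is of order $\ell$, which is unbounded as $D_g$ sits closer to the critical locus; this is the Colli--Vargas mechanism.

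There is a clean witness inside this very family. Your length argument uses nothing specific to the code $1^\infty$, so it applies equally to the all-$0$ fixed point $p_f$. But $p_f$ satisfies the majority condition, so by \cite[Theorem B]{KNS} (Corollary \ref{cor_B}) it \emph{is} followed in the sense of \eqref{defspl} by positive-measure sets for $g$ arbitrarily close to $f$. Under \eqref{eqn_KNS} one has $\lambda_{\mathrm{cs}0}\lambda_{\mathrm{u}}<1<\lambda_{\mathrm{cs}1}\lambda_{\mathrm{u}}$, and it is precisely this asymmetry of $u$--$cs$ area that separates the two fixed points.

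Your parameter choice therefore points the wrong way. A dissipativity condition such as $\lambda_1\max\{\sigma,\mu\}<1$ is the regime that \emph{permits} contracting wandering domains (cf.\ \cite{KS17}). It does not even imply the volume contraction $\lambda_1\sigma\mu<1$ used in your first step. Also, a $1$-branch with two expanding directions is not the model of \cite{KNS}. Finally, you have no control over where in $M$ an excursion goes.

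The paper handles both problems with a specific $f_0$ and a different monitored quantity.
\begin{enumerate}
\item \textbf{Trapping.} The Extra Assumptions are: $f_0(\mathbb{X}_2)\cap\mathbb{B}=\emptyset$, $f_0^2(\mathbb{X}_2)\subset\mathbb{X}_0$, and everything entering $\mathbb{Y}\cup\mathbb{S}$ is attracted to a sink. Together with the describability given by the implication `strongly pluripotent $\Rightarrow$ describable' of Theorem \ref{p-lemma}, they force $g^j(D_g)\subset\mathbb{X}_{g,0}\cup\mathbb{X}_{g,1}\cup\mathbb{X}_{g,2}\cup g(\mathbb{X}_{g,2})$ for all large $j$. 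So the only possible excursion is $\mathbb{X}_{g,2}\to g(\mathbb{X}_{g,2})\to\mathbb{X}_{g,0}$.
\item \textbf{Area instead of length.} The paper tracks the $2$-dimensional area of a horizontal ($u$--$cs$) section of $g^{\alpha_2}(D_g)$. On horizontal planes the excursion map is the H\'enon-like map $(x,z)\mapsto(-a_1(x-\frac12)^2+a_2(z+\mu),\,a_4(x-\frac12)+\frac12)$, whose Jacobian is the constant $|a_2a_4|$. Hence the fold that destroys length costs only a bounded factor in area.
\item \textbf{Bounded losses.} Lemma \ref{l_ea_horizontal} keeps all images $\varepsilon$-almost horizontal, so projected areas stay comparable. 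Keeping the largest piece when an image splits costs at most a factor $\frac13$.
\item \textbf{Exponential growth.} Each $j\in\mathbb{I}_k$ with $v_j=1$ multiplies the projected area by at least $\rho_1=\lambda_{\mathrm{cs}1}\lambda_{\mathrm{u}}-O(\varepsilon^2)>1$; this is where \eqref{eqn_lamcs1} enters. For an asymptotically $1$-filling itinerary such steps have density one. All other factors are bounded below by $\widehat\rho$ and occur only $o(n)$ times.
\end{enumerate}
Hence the projected area of sets lying in $I_{\varepsilon_0}^2$ would tend to infinity, which is the contradiction.

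Your choice of the all-$1$ fixed point is fine, since it is asymptotically $1$-filling. Without the trapping structure and the area invariant, however, the argument does not go through.
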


The diffeomorphism $f_0$ given in Theorem \ref{thm_A} is essentially the same as the one in \cite[Theorem~B]{KNS}, with the extra assumptions in Section \ref{S_2} added.
Theorem \ref{thm_A} exhibits an obstruction to full strong pluripotency in the present family.

By Theorem \ref{thm_A} together with \cite[Theorem B]{KNS}, we have the following corollary.

\begin{mcorollary}\label{cor_B}
If $2\leq r<\infty$, then there exist $f_0$ and $\mathcal{U}_0$ as in Theorem \ref{thm_A} 
such that  
any element $f$ of $\mathcal{U}_0$ is strongly pluripotent for $\Lambda_f^{({\rm mj})}$ but not 
for $\Lambda_f$.
\end{mcorollary}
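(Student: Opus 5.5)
The strategy is to find a single point of $\Lambda_f$, robustly in $f$, whose orbit cannot be shadowed on average by any positive-measure set of orbits of any nearby $g$. The natural candidate is a point that violates the majority condition \eqref{eqn_mj_cond} as strongly as possible. The simplest choice is the continuation $p_f$ of the fixed point of $\Lambda_f$ with code $v_j=1$ for all $j$. In the affine blender-horseshoe model of \cite{KNS}, the two branches of the horseshoe act differently in the centre direction: one branch, say the one coded by $0$, is centre-contracting, and the branch coded by $1$ is centre-expanding. With the extra assumptions of Section \ref{S_2}, I would arrange that the point $p_{f_0}$ coded by $1^{\mathbb{Z}}$ has a centre-unstable direction and a strong-unstable direction whose combined expansion strictly exceeds the contraction in the stable direction. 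Concretely, $|\det Df_0|_{T_{p}M}|>1$, i.e.\ $p_{f_0}$ is volume-expanding. This is an open condition, so after shrinking $\mathcal{U}_0$ it holds, with a uniform constant $\kappa>1$, for $|\det Dg|$ on a fixed small neighbourhood $V$ of $p_g$, for every $g\in\mathcal{U}_0$. Note that the conclusion concerns $g$ arbitrarily $C^r$-close to $f$; shrinking $\mathcal{U}_0$ to a smaller open set $\mathcal{U}_0'$ with $\overline{\mathcal{U}_0'}$ still satisfying the estimate ensures that all relevant $g$ lie in the good region.

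Now suppose, for contradiction, that some $f\in\mathcal{U}_0$ is strongly pluripotent for $\Lambda_f$. Apply Definition \ref{dfn1} to $x=p_f$: this gives $g$ close to $f$ and a set $D_g$ of positive Lebesgue measure satisfying \eqref{defspl} with $x_g=p_g$, a fixed point of $g$. Set $a_i=\sup_{y\in D_g}\mathrm{dist}(g^i(y),p_g)$. Then the Cesàro means of $(a_i)$ tend to $0$, so the set $I=\{i : a_i<\varepsilon\}$ has lower density at least $1-\delta$ for any prescribed $\varepsilon,\delta>0$. For $i\in I$ the entire image $g^i(D_g)$ lies in the $\varepsilon$-ball around $p_g$, which is inside $V$ when $\varepsilon$ is small.

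The volume count then goes as follows. Let $C=\max_{h\in\mathcal{U}_0}\|\det Dh\|_\infty^{\pm1}$. For any $n$, split $\{0,\dots,n-1\}$ into the indices in $I$ and those outside. On each step $i\to i+1$ with $i\in I$, the set $g^i(D_g)$ is in $V$, so its volume is multiplied by at least $\kappa$; on every other step it is multiplied by at least $C^{-1}$. This gives
\[
\mathrm{Leb}(g^n(D_g))\geq \kappa^{\#(I\cap[0,n))}C^{-(n-\#(I\cap[0,n)))}\,\mathrm{Leb}(D_g)\geq \left(\kappa^{1-\delta}C^{-\delta}\right)^{n}\mathrm{Leb}(D_g).
\]
Choose $\delta$ so small that $\kappa^{1-\delta}C^{-\delta}>1$. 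Then the right-hand side tends to infinity, which is impossible since for infinitely many $n$ the set $g^n(D_g)$ sits inside a ball of radius $\varepsilon$. This contradiction shows that $f$ is not strongly pluripotent for $\Lambda_f$.

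The main obstacle is the first step. I have to check that the affine blender-horseshoe of \cite{KNS}, with the parameters needed for Theorem B there (wildness, the blender property, and the majority-condition pluripotency), is compatible with a volume-expanding fixed point in $\Lambda_{f_0}$. The blender requires centre contraction and expansion of specific sizes on the two branches, with dissipativity along the stable direction. The extra assumptions of Section \ref{S_2} should therefore impose only an inequality on the eigenvalues of the $1$-branch, namely stable contraction times centre expansion times unstable expansion greater than $1$. I expect this to be achievable by choosing the strong-unstable eigenvalue large, since it plays no role in the blender or tangency arguments. If dissipativity elsewhere is needed for the wild tangency, this does not conflict, because the constraint is local to the fixed point.
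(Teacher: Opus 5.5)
\textbf{The gap is in your first step, and it is fatal for Corollary~\ref{cor_B}.} Your volume count is sound as a general principle: Cesàro-shadowing of a volume-expanding fixed point by a positive-measure set forces $\mathrm{Leb}(g^n(D_g))\to\infty$. The problem is that no such fixed point, or any volume-expanding orbit, exists in this family once the parameters are chosen so that \cite[Theorem B]{KNS} applies.

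You need \cite[Theorem B]{KNS} to get strong pluripotency for $\Lambda_f^{(\mathrm{mj})}$, and it requires \eqref{eqn_KNS}. You also misdescribe the model. Both branches are centre-\emph{contracting} ($\lambda_{\rm cs0}<\lambda_{\rm cs1}<1$), and the only expanding direction is $x$, with rate $\lambda_{\rm u}$. Hence $|\det Df_0|=\lambda_{\rm u}\lambda_{\rm ss}\lambda_{{\rm cs}i}$ on $\mathbb{V}_i$. Using $\lambda_{\rm ss}<\lambda_{\rm cs0}$ and $\lambda_{\rm u}>1$ from \eqref{eqn_KNS},
\[
\lambda_{\rm u}\lambda_{\rm ss}\lambda_{\rm cs0}<\lambda_{\rm u}\lambda_{\rm ss}\lambda_{\rm cs1}<\lambda_{\rm cs0}\lambda_{\rm cs1}\lambda_{\rm u}<\lambda_{\rm cs0}\lambda_{\rm cs1}\lambda_{\rm u}^2<1 .
\]
So $f_0$, and every $f$ near it, is volume-contracting on a neighbourhood of the whole horseshoe. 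You cannot repair this by enlarging $\lambda_{\rm u}$, because $\lambda_{\rm cs0}\lambda_{\rm cs1}\lambda_{\rm u}^2<1$ bounds $\lambda_{\rm u}$ from above. Thus no choice of orbit in $\Lambda_f$ lets a $3$-dimensional volume count give a contradiction. At best your argument proves a variant of Theorem~\ref{thm_A} under the extra hypothesis $\lambda_{\rm u}\lambda_{\rm ss}\lambda_{\rm cs1}>1$, which is exactly what the corollary's parameter regime forbids.

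\textbf{The missing idea is to measure growth on $2$-dimensional horizontal sections, not $3$-dimensional volume.} What expands along the $1$-branch is area in the $xz$ (uc) directions: $\lambda_{\rm cs1}\lambda_{\rm u}>1$ by \eqref{eqn_lamcs1}, which is compatible with \eqref{eqn_KNS}. Volume contracts only because of the strong-stable $y$-direction. The paper exploits this as follows.
\begin{enumerate}
\item It takes an asymptotically $1$-filling point and applies the converse direction of the Pluripotency Lemma to get describability blocks satisfying \eqref{D1} and \eqref{D2}.
\item It uses the Extra Assumptions to confine $g^j(D_g)$ to $\mathbb{T}_g$.
\item It uses Fubini to extract a horizontal slice $A_{\alpha_2}$ of positive area.
\item It uses Lemma~\ref{l_ea_horizontal} to keep all forward images on $\varepsilon$-almost horizontal proper surfaces. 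Projected area is then comparable to surface area and bounded by $\mathrm{Leb}(I_{\varepsilon_0}^2)$.
\item It controls the gaps between blocks by the $\frac13$-selection step and the factor $\rho_2$.
\end{enumerate}
The resulting lower bound $\rho_1^{\frac12\sum\beta_i}\,\widehat\rho^{\,\sum(\gamma_i+\tau_i)}$ diverges, which gives the contradiction. This area-on-slices mechanism cannot be replaced by a volume argument in the dissipative regime the corollary requires.
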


We note that, to apply Theorem B in \cite{KNS}, $f_0$ in Corollary \ref{cor_B} must have a wild blender-horseshoe.

Corollary \ref{cor_B} raises the following question.

\begin{question}
Does there exist a maximal subset $\Lambda_{f_0}'$ of $\Lambda_{f_0}$ such that
every $f \in \mathcal{U}_0$ is strongly pluripotent for the continuation
$\Lambda_f'$ of $\Lambda_{f_0}'$?
If so, determine $\Lambda_{f_0}'$.
\end{question}

Theorem \ref{thm_A} says nothing about the (weak) pluripotency for $\Lambda_f$. 
So the following question remains open.

\begin{question}
Does there exist a diffeomorphism $f$ which is arbitrarily close to $f_0$ given in Theorem \ref{thm_A} 
and pluripotent for the whole $\Lambda_f$?
\end{question}

\section{Describability and Pluripotency Lemma}\label{S_describable}

In this section, we present the practical necessary and sufficient condition for a diffeomorphism $f$ on 
$M$ to be strongly pluripotent.

A pair $\{\mathbb{U}_0,\mathbb{U}_1\}$ of connected open sets in $M$ with mutually disjoint closures 
 is called a  \emph{coding pair} of a horseshoe $\Lambda_f$ of $f$ if 
\[
\Lambda_f =\bigcap_{i\in \mathbb{Z}} f^{i}(\mathbb U_0 \sqcup \mathbb U_1)
\]
and the restriction $f|_{\Lambda_f}:\Lambda_f\longrightarrow \Lambda_f$ is topologically conjugate 
to the shift map on $ \{ 0,1\} ^{\mathbb Z}$ by the coding map  
$\mathcal{I}_f: \Lambda_f  \longrightarrow  \{ 0,1\} ^{\mathbb Z}$ defined as 
\[
\left(\mathcal{I}_f(x) \right)_j = v \quad \text{if $f^j(x) \in \mathbb U_{v}$}, 
\]
where $(\mathcal{I}_f(x) )_j$ stands for the $j$-th entry of $\mathcal{I}_f(x)$.

\begin{dfn}[Describability]\label{describable}
Let $\Sigma$ be a subset of $\{0,1\}^{\mathbb N_0}$ and 
$f$ an element of $\mathrm{Diff}^{r}(M)$ with a horseshoe $\Lambda_f$ associated with a coding pair $\{ \mathbb U_0, \mathbb U_1 \}$, 
where $\mathbb N_0=\mathbb{N}\cup \{0\}$.  
We say that $f$ is $\Sigma$-\emph{describable} over $\Lambda_f$  
if any element $\underline{v}=(v_{0}v_{1}v_{2}\ldots\,)$ of $\Sigma$
satisfies the following conditions: 

\begin{enumerate}[({D}1)]
\makeatletter
\renewcommand{\p@enumi}{D}
\makeatother
\item\label{D1}
There exist a strictly increasing sequence $(\alpha _k)_{k\in \mathbb N}$ of non-negative integers and 
integers $\beta _k$ $(k\in \mathbb N)$ with $0\leq \beta _k\leq \alpha _{k+1}-\alpha _k-1$ and  
such that 
\[
\lim _{n\rightarrow \infty} \frac{\#\left\{j\,;\, 0\le j \le n-1,\ j \in \bigcup _{k=1}^\infty \mathbb{I}_k\right\}}{n} =1, 
\]
where $\mathbb{I}_k
=[\alpha_k, \alpha _k+ \beta_k] \cap \mathbb Z$.
\item\label{D2}
There exist an element $g$ of $\mathrm{Diff}^{r}(M)$ arbitrarily $C^r$-close to $f$ and  
a positive Lebesgue measure subset $D_g$ of $M$  
such that  
\[
g^j (D_g) \subset \mathbb U_{v_{j}}
\]
for any  $j\in \bigcup_{k\in \mathbb{N}
} \mathbb I_k$.  
\end{enumerate}
\end{dfn}

The following theorem  in \cite[Theorem A]{KNS} plays 
an important role in the proof of the main theorem, 
where $\widehat \Sigma$ is the subset of $\{0,1\}^{\mathbb{Z}}$ consisting of elements 
$(v_j)_{j\in \mathbb{Z}}$ with $(v_j)_{j\in \mathbb{N}_0}\in \Sigma$.

\begin{thm}[Pluripotency Lemma]\label{p-lemma}
Suppose that $f$ is an element of $\mathrm{Diff}^r(M)$ $(r\geq 1)$ with a horseshoe $\Lambda_f$ associated with a coding pair $\{ \mathbb U_0, \mathbb U_1 \}$ and 
$\Sigma$ is a non-empty subset of $\{0,1\}^{\mathbb N_0}$.
Then $f$ is $\Sigma$-describable if and only if 
$f$ is strongly pluripotent for $\mathcal{I}_f^{-1}(\widehat \Sigma)$.
\end{thm}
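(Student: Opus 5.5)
The plan is to prove the two implications separately. The direction ``strongly pluripotent $\Rightarrow$ describable'' is essentially a Chebyshev-type density argument. The direction ``describable $\Rightarrow$ strongly pluripotent'' needs the hyperbolicity of the horseshoe to turn ``same itinerary on a long time window'' into ``close orbits in the middle of the window''.

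\emph{Strong pluripotency implies describability.} Fix $\underline v\in\Sigma$ and let $x=\mathcal I_f^{-1}(\hat v)$ for some $\hat v\in\widehat\Sigma$ extending $\underline v$. Choose $\varepsilon>0$ so that the $\varepsilon$-neighborhood of $\Lambda_f\cap\mathbb U_v$ lies in $\mathbb U_v$ for $v=0,1$. Shrinking the allowed perturbations, this also holds for $\Lambda_g$ with any $g$ sufficiently $C^r$-close to $f$, because the closures of $\mathbb U_0,\mathbb U_1$ are disjoint and $\Lambda_g$ depends continuously on $g$. Take $g$ and $D_g$ given by \eqref{defspl}, and put $a_i=\sup_{y\in D_g}\mathrm{dist}(g^i(y),g^i(x_g))$. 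Since $\frac1n\sum_{i<n}a_i\to0$, Chebyshev's inequality shows that the set $J=\{i\,;\,a_i<\varepsilon\}$ has density one. For $j\in J$ we have $g^j(x_g)\in\mathbb U_{v_j}$, since the continuation keeps the code, and therefore $g^j(D_g)\subset\mathbb U_{v_j}$. Writing $J$ as the union of its maximal integer intervals $\mathbb I_k=[\alpha_k,\alpha_k+\beta_k]$ automatically gives $\alpha_k+\beta_k\le\alpha_{k+1}-2$, so $\beta_k\le\alpha_{k+1}-\alpha_k-1$. This yields (D1) and (D2). If $J$ has only finitely many maximal intervals, its last interval is infinite; we then cut it into consecutive finite pieces separated by single dropped integers, chosen sparse enough that density one is preserved.

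\emph{Describability implies strong pluripotency.} Fix $x\in\mathcal I_f^{-1}(\widehat\Sigma)$ with forward code $\underline v$, and take $\alpha_k,\beta_k,g,D_g$ as in (D1) and (D2). We may take $g$ so close to $f$ that $\{\mathbb U_0,\mathbb U_1\}$ is still a coding pair for $\Lambda_g$ with uniform hyperbolicity constants $C>0$ and $0<\lambda<1$. The key step is a uniform estimate: if $z,w$ satisfy $g^j(z),g^j(w)\in\mathbb U_{v_j}$ for all $j\in[a,b]$, then
\[
\mathrm{dist}(g^j(z),g^j(w))\le C\lambda^{\min(j-a,\,b-j)}\qquad (a\le j\le b).
\]
This should follow from the cone/rectangle structure: a piece of orbit with a prescribed itinerary of length $m$ lies in a rectangle whose stable and unstable widths shrink exponentially away from the endpoints. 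Applying it with $z=y\in D_g$ and $w=x_g$ on each $\mathbb I_k$ bounds $\sum_{j\in\mathbb I_k}a_j$ by a constant $C'$ independent of $k$. Outside $\bigcup_k\mathbb I_k$ we use $a_j\le\mathrm{diam}\,M$, and this set has density zero by (D1).

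It remains to show that the number $N(n)$ of intervals $\mathbb I_k$ meeting $[0,n)$ is $o(n)$. This holds because consecutive intervals are separated by at least one integer outside $\bigcup_k\mathbb I_k$, so $N(n)\le\#\{j<n\,;\,j\notin\bigcup_k\mathbb I_k\}+1=o(n)$. Hence
\[
\frac1n\sum_{j<n}a_j\le \frac{C'N(n)+\mathrm{diam}(M)\cdot o(n)}{n}\longrightarrow0,
\]
which is \eqref{defspl}.

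The main obstacle is the uniform exponential estimate above. It must hold for all $g$ in a neighborhood of $f$ and for points of $D_g$ that are \emph{not} in $\Lambda_g$ but only follow the itinerary on a finite window. I would prove it by working in local adapted coordinates on $\mathbb U_0\sqcup\mathbb U_1$, using invariant cone fields that persist under $C^1$-small perturbation, and applying the standard argument that the set of points with a given finite itinerary is a $C^1$ rectangle of exponentially small width in the middle of the window.
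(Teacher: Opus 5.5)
The paper does not prove this lemma; it quotes it from \cite{KNS}. Your overall plan is the natural one: Chebyshev's inequality for strong pluripotency $\Rightarrow$ describability, and contraction in the middle of itinerary windows plus a count of windows for the converse. Your first direction is fine.

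\textbf{Adjacent windows.} The converse contains a step that is false as written. You claim that consecutive windows $\mathbb{I}_k$ are separated by at least one integer outside $\bigcup_k\mathbb{I}_k$. But (D1) only requires $\beta_k\le\alpha_{k+1}-\alpha_k-1$, so adjacent windows with $\alpha_k+\beta_k+1=\alpha_{k+1}$ are allowed. For example, $\mathbb{I}_k=\{k\}$ for all $k$ satisfies (D1). In that case your per-window bound gives only $a_k\le C$ and $N(n)=n$, so nothing follows.

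The fix is to first replace the windows by the maximal runs of consecutive integers in $\bigcup_k\mathbb{I}_k$. This is exactly the merging in Remark~\ref{r_ab+2}, which the authors take from the proof in \cite{KNS}. Condition (D2) is unaffected, since it only concerns the union. If only finitely many runs remain, the last one is $[\alpha,\infty)\cap\mathbb{Z}$. Applying your estimate on $[\alpha,b]$ and letting $b\to\infty$ gives $a_j\le C\lambda^{j-\alpha}$, which is still summable. After merging, your bound $N(n)\le o(n)+1$ and the rest of the computation go through.

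\textbf{The uniform estimate.} This estimate carries the whole converse, and your sketch of it does not work under the stated hypotheses. Invariant cone fields exist only on a small neighbourhood of $\Lambda_f$, not on the arbitrary open sets $\mathbb{U}_0,\mathbb{U}_1$, and points of $D_g$ need not be near $\Lambda_g$.

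You first need a compactness step: for every $\delta>0$ there is $N$ such that, for all $g$ near $f$, the condition $g^j(y)\in\mathbb{U}_{v_j}$ for $|j-m|\le N$ forces $g^m(y)$ to be $\delta$-close to $\Lambda_g$. Only after that can local product structure and expansivity give closeness to $g^m(x_g)$.

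This step requires $\Lambda_f$ to be the maximal invariant set of $\mathrm{Cl}(\mathbb{U}_0\cup\mathbb{U}_1)$, robustly in $g$. A coding pair only controls the open set $\mathbb{U}_0\sqcup\mathbb{U}_1$. For instance, a non-hyperbolic fixed point of $f$ on $\partial\mathbb{U}_0$ can become, for some $g$ arbitrarily close to $f$, a sink lying inside $\mathbb{U}_0$. A small trapped ball around it satisfies (D2) for the itinerary $0^\infty$ but never approaches $\Lambda_g$. In that case your claimed estimate fails, and so does your remark that $\{\mathbb{U}_0,\mathbb{U}_1\}$ remains a coding pair for $g$. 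So you must state this isolation property and use it explicitly.

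Finally, you do not need exponential rates. The soft statement gives $a_j<\varepsilon$ for every $j$ at distance at least $N$ from both ends of a merged window. Hence $\frac1n\sum_{j<n}a_j\le\varepsilon+\mathrm{diam}(M)\bigl(2N\,N(n)+o(n)\bigr)/n$, and $\varepsilon>0$ is arbitrary.
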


\begin{remark}
The novelty of Theorem \ref{thm_A} lies in using the converse direction of the Pluripotency Lemma as an obstruction principle. 
In our previous works, one verified describability in order to prove strong pluripotency. 
In contrast, here we show that, for asymptotically 1-filling itineraries (see \eqref{eqn_1-filling} for the difinition), describability would force long good blocks whose projected-area growth is incompatible with 
the geometric phenomena caused by the $g$-orbit of $D_g$. 
This obstruction rules out full strong pluripotency.
\end{remark}

\section{Wild affine blender-horseshoes}\label{S_2}

For simplicity, we only consider the case that $M$ is a closed manifold of dimension three.
In the case of $\dim M=n>3$, our arguments still work    
for certain elements of $\mathrm{Diff}^r(M)$ having a blender-horseshoe 
$\Lambda$ with $\dim W^{\mathrm{u}}(\Lambda)=\dim W^{\mathrm{cs}}(\Lambda)=1$ and $\dim W^{\mathrm{ss}}(\Lambda)=n-2$.
A diffeomorphism $f_0$ on $M$ presented in this section has a wild affine blender-horseshoe similar to those in \cite{KNS23, KNS}.

\medskip

Let $\lambda_{\rm ss}, \lambda_{\rm cs0}, \lambda_{\rm cs1}$ and $\lambda_{\rm u}$ 
be real positive constants with
\begin{equation}\label{eqn_eigen_v}
\lambda_{\rm ss}<\frac12,\quad
\lambda_{\rm ss}<\lambda_{\rm cs0}\leq \lambda_{\rm cs1}<1<\lambda_{\rm cs0}+\lambda_{\rm cs1},\quad 2<\lambda_{\rm u}.
\end{equation}
Here the inequalities $\lambda_{\rm ss}<2^{-1}$ and $2<\lambda_{\rm u}$ are necessary to define a blender-horseshoe.
The condition $1<\lambda_{{\rm cs}0}+\lambda_{{\rm cs}1}$ is used to show that the blender-horseshoe has the \emph{distinctive property}.  
See Section 6.2 in \cite{BDV05} for details.
The property is required in the proof of \cite[Theorem B]{KNS}, 
which is essential for proving Corollary \ref{cor_B}.
It follows from \eqref{eqn_eigen_v} that   
\begin{equation}\label{eqn_lamcs1}
\lambda_{\rm cs1}\lambda_{\rm u}>1,
\end{equation}
which is crucial in the proof of Theorem \ref{thm_A}.
We fix a sufficiently small  $\varepsilon_0>0$.
For example, it is taken so that $1/2-\lambda_{\rm ss}(1+\varepsilon_0)$ is positive.
The closed interval $[-\varepsilon_{0},1+\varepsilon_{0}]$ is denoted by  $I_{\varepsilon_{0}}$ 
and the cube $I_{\varepsilon_{0}}^{3}=I_{\varepsilon_{0}}\times I_{\varepsilon_{0}}\times I_{\varepsilon_{0}}$ by $\mathbb{B}$.
We suppose that $\mathbb{B}$ is embedded in $M$ 
and $M$ has a Riemannian metric extending the standard Euclidean metric on $\mathbb{B}$.
Consider the sub-blocks of $\mathbb{B}$ defined as 
\[
\mathbb{V}_{0}= [-\varepsilon_{0}, \lambda_{\rm u}^{-1}+\varepsilon_0]\times I_{\varepsilon_{0}}^{2},\quad 
\mathbb{V}_{1}= [1-\lambda_{\rm u}^{-1}-\varepsilon_0,1+\varepsilon_{0}]\times I_{\varepsilon_{0}}^{2}. 
\]
Let $f_{0}$ be a $C^r$-diffeomorphism on $M$ such that $f_{0}|_{\mathbb{V}_{0}\cup \mathbb{V}_{1}}$ is defined as
\begin{equation}\label{eqn_f0V}
f_{0}(x,y,z)=
\begin{cases}
(\lambda_{\mathrm{u}}x,\lambda_{\mathrm{ss}}y,\zeta_0(z))&\text{if}\ (x,y,z)\in \mathbb{V}_{0},\\
(\lambda_{\mathrm{u}}(1-x),1-\lambda_{\mathrm{ss}}y,\zeta_1(z)) &\text{if}\ (x,y,z)\in \mathbb{V}_{1},
\end{cases}
\end{equation} 
where 
$\zeta_0$ and $\zeta_1$ are the affine maps on $I_{\varepsilon_0}$ given by
\[
\zeta_0(z)=\lambda_{\mathrm{cs} 0}z\quad\text{and}\quad \zeta_1(z)=\lambda_{\mathrm{cs} 1}z+1-\lambda_{\mathrm{cs} 1}.
\]
From our setting, $f_{0}$ has the uniformly hyperbolic set  
\[
\Lambda_{f_{0}}=\bigcap_{n\in \mathbb{Z}} f_{0}^n(\mathbb{V}_{0}\cup \mathbb{V}_{1}),
\]
which belongs to the class of affine blender-horseshoes, see \cite{BD96,BD12} for details. 
Note that $p_{f_0}=(0,0,0)$ is the saddle fixed point of $f_0$ contained in $\mathbb{V}_0$.

Let $a_{1},a_{2},a_{3},a_{4}$ and $\mu$ be constants 
satisfying  
\begin{subequations}
\begin{align}
&a_{1}>0,\quad a_2>0,\quad a_3a_4<0,\label{eqn_aaa}\\
&|a_{3}|\left(\frac12+\varepsilon_0\right)<\frac12-\lambda_{\rm ss}(1+\varepsilon_0),\label{eqn_|a3|}\\
&|a_3|<\min\left\{\dfrac{a_2|a_4|}{\sqrt{4a_1^2+a_2^2}},\dfrac{a_1a_2}{\sqrt{4a_1^2+a_2^2}}(1-2\varepsilon_0)\right\},\label{eqn_min}\\
&a_1^{-1}a_2(1+\varepsilon_0+\mu)^2<\left(\frac12-\lambda_{\rm u}^{-1}\right)^2,\  
a_1^{-1}a_2a_4^2(1+\varepsilon_0+\mu)<\left(\frac12+\varepsilon_0\right)^2,\label{eqn_a42}\\
&0<-\varepsilon_0+\mu,\quad a_2(1+\varepsilon_0+\mu)<\lambda_{\rm u}^{-1}.\label{eqn_e_0mu}
\end{align}
\end{subequations}
It is not hard to find such constants by taking $a_1$ sufficiently large and next $|a_3|$ sufficiently small.
The conditions other than \eqref{eqn_min} are used in this section, but 
\eqref{eqn_min} is in the proof of Lemma \ref{l_ea_horizontal}.

Now we consider the subsets 
\begin{align*}
\mathbb{X}_{0}&= [0, \lambda_{\rm u}^{-1}]\times I_{\varepsilon_{0}}^{2},\quad 
\mathbb{X}_{1}= [1-\lambda_{\rm u}^{-1},1]\times I_{\varepsilon_{0}}^{2},\\
\mathbb{X}_2&=\left\{(x,y,z)\in \mathbb{B}\,\Big|\,z\geq \frac{a_1}{a_2}\left(x-\frac12\right)^2-\mu\right\}
\end{align*}
of $\mathbb{B}$.
Note that $\mathbb{X}_{0}\subset \mathbb{V}_{0}$ and $\mathbb{X}_{1}\subset \mathbb{V}_{1}$.

We suppose that the $y$-axis direction in $\mathbb{R}^3$ is vertical and 
say that a surface in $\mathbb{R}^3$ is parallel to the $xz$-plane is \emph{horizontal}.
Let $q_+$, $q_-$ be the intersection points of the parabolic cylinder 
$z=\dfrac{a_1}{a_2}\left(x-\dfrac12\right)^2-\mu$, the vertical plane $z=1+\varepsilon_0$ and 
the horizontal plane $y=\frac12$ in $\mathbb{R}^3$.
Since the $x$-entry of $q_\pm$ is $\dfrac12\pm\sqrt{a_1^{-1}a_2(1+\varepsilon_0+\mu)}$, 
the former inequality of \eqref{eqn_a42} implies $\xx_2\cap (\xx_0\cup \xx_1)=\emptyset$.
We denote by $\mathbb{Y}$ and $\mathbb{S}$ the closures of $[\lambda_{\rm u}^{-1}, 1-\lambda_{\rm u}^{-1}]\times I_{\varepsilon_0}^2\setminus \mathbb{X}_2$ and $\mathbb{B}\setminus [0,1]\times I_{\varepsilon_0}^2$ in $\mathbb{B}$ respectively.
The former inequality of \eqref{eqn_e_0mu} implies that $\mathbb{Y}$ consists of two components 
and hence the frontier of $\mathbb{X}_2$ in $\mathbb{B}$ consists of the two curved rectangles $F_2^{\pm}$ 
as illustrated in Figure \ref{f_X012}.
Here, for a subset $A$ of a topological space $B$,  the \emph{frontier} (or \emph{topological boundary}) of $A$ in $B$ is the intersection 
$\mathrm{Cl}(A)\cap \mathrm{Cl}(B\setminus A)$, which is denoted by $\mathrm{Fr}_B(A)$.
\begin{figure}[hbtp]
\centering
\scalebox{0.6}{\includegraphics[clip]{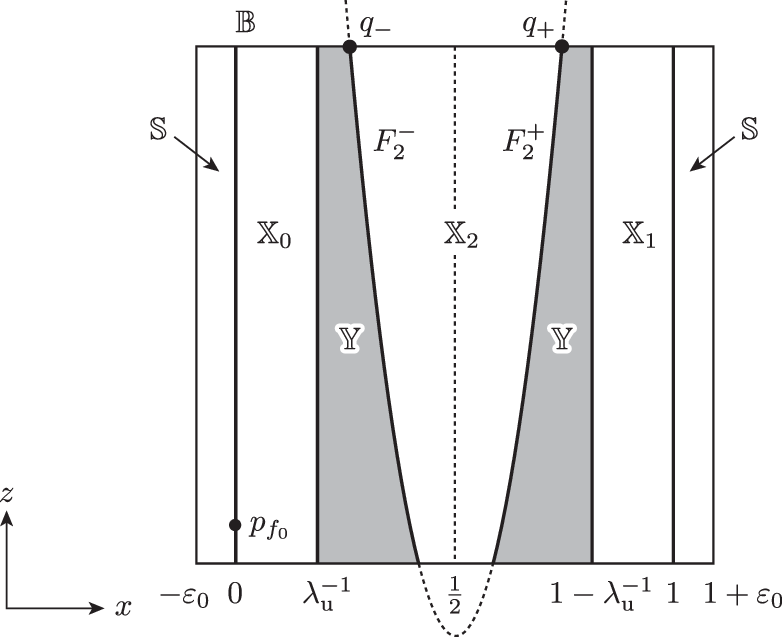}}
\caption{View from the top. $\mathrm{Fr}(\xx_2)=F_2^-\cup F_2^+$.}
\label{f_X012}
\end{figure}
Simply we set $\mathrm{Fr}_{\mathbb{B}}(A)=\mathrm{Fr}(A)$ for any subset $A$ of $\mathbb{B}$.
Then $\mathrm{Fr}(\xx_0)=\{0,\lambda_{\rm u}^{-1}\}\times I_{\varepsilon_0}^2$, 
$\mathrm{Fr}(\xx_1)=\{1-\lambda_{\rm u}^{-1},1\}\times I_{\varepsilon_0}^2$ and 
$\mathrm{Fr}(\mathbb{S})=\{0,1\}\times I_{\varepsilon_0}^2$,

For any $(x,y,z)\in \mathbb{X}_{2}$ and constants $a_1$, $a_2$, $a_3$, $a_4$, $\mu$ satisfying  \eqref{eqn_aaa}--\eqref{eqn_e_0mu}, 
we suppose that   
\begin{equation}\label{eqn_tangency}
f_{0}^{2}(x,y,z)=\left(-a_{1}\Bigl(x-\frac{1}{2}\Bigr)^{2}+a_{2}(z+\mu),\ a_{3}\Bigl(y-\frac{1}{2}\Bigr)+\frac{1}{2},\ 
a_{4}\Bigl(x-\frac{1}{2}\Bigr)+\frac{1}{2} \right).
\end{equation}

The latter inequality of \eqref{eqn_a42} implies that $f_0^2(q_\pm)$ are contained in 
$\{0\}\times \mathrm{Int} I_{\varepsilon_0}^2$.
By this fact together with \eqref{eqn_e_0mu}, one can prove that $f_0^2(\mathbb{X}_2)$ is 
contained in $\mathbb{X}_0$.
See Figures \ref{f_blender} and \ref{f_Yf0} below.
The image of $f_0^2(\xx_2)$ by the orthogonal projection to the $y$-axis is the closed interval 
$\left[-|a_3|\left(\frac12+\varepsilon_0\right)+\frac12,\,  |a_3|\left(\frac12+\varepsilon_0\right)+\frac12\right]$.
Hence, by \eqref{eqn_|a3|}, $f_0^2(\xx_2)$ lies above $f_0(\xx_0)$ and below $f_0(\xx_1)$.
See Figure \ref{f_blender}. 
\begin{figure}[hbtp]
\centering
\scalebox{0.6}{\includegraphics[clip]{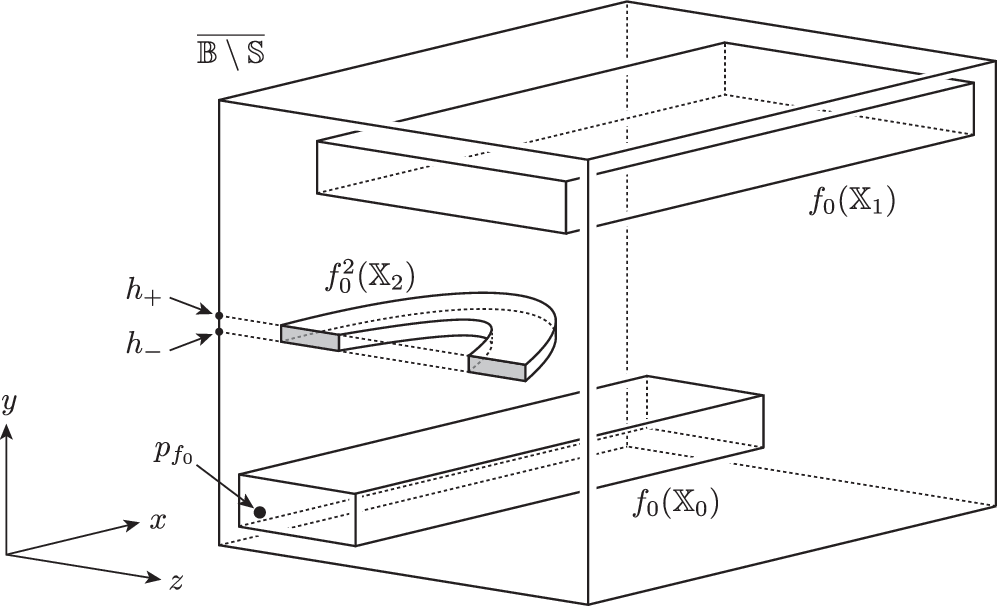}}
\caption{$h_\pm=\pm |a_3|\left(\frac12+\varepsilon_0\right)+\frac12$. 
The union of shaded rectangles represents $f_0^2(\mathrm{Fr}(\xx_2))$.
The front square $\{0\}\times I_{\varepsilon_0}^2$ of $\overline{\mathbb{B}\setminus \mathbb{S}}$  is the component of $W^{\mathrm{s}}(p_{f_0},f_0)\cap \mathbb{B}$ containing $p_{f_0}$.}
\label{f_blender}
\end{figure}
From our definition \eqref{eqn_f0V} of $f_0$ on $\mathbb{V}_0\cup \mathbb{V}_1$, we know that the $f_0^2$-image of the frontier $\mathrm{Fr}(\mathbb{X}_0\cup \mathbb{X}_1)$ is also contained in 
$\{0\}\times \mathrm{Int} I_{\varepsilon_0}^2$.
As in \cite[Lemma 1.4]{KNS23}, one can choose $\mu$ in \eqref{eqn_tangency} 
so that $f_0$ has a $C^{1}$-robust homoclinic tangency associated with $\Lambda_{f_0}$. 
Then $\Lambda_{f_0}$ is called a \emph{wild affine blender-horseshoe}.

By \eqref{eqn_tangency}, for $\boldsymbol{x}=(x,y,z)\in \mathbb{X}_2$, 
\begin{equation}\label{eqn_DfX2}
Df_0^2(\bx)=\begin{pmatrix}-2a_1\left(x-\frac12\right)&0&a_2\\
0&a_3&0\\
a_4&0&0\end{pmatrix}.
\end{equation}
By \eqref{eqn_aaa}, $\det Df_0^2(\bx)=-a_2a_3a_4>0$.

By \eqref{eqn_f0V}, for any horizontal surface $H$ in $\xx_i$ $(i=0,1)$ and a 
Lebesgue measurable subset $A$ of $H$, $f_0(H)$ is a horizontal surface in $\mathbb{B}$ and 
\begin{equation}\label{eqn_Leb_P0}
\mathrm{Leb}_{f_0(H)}(f_0(A))=\lambda_{{\rm cs}i}\lambda_{\rm u}\mathrm{Leb}_H(A),
\end{equation}
where $\mathrm{Leb}_H(\cdot)$ denotes the standard 2-dimensional Lebesgue measure on $H$.
On the other hand, by \eqref{eqn_tangency} and \eqref{eqn_DfX2}, for any horizontal surface $H'$ in $\xx_2$ and a Lebesgue measurable subset $A'$ of $H'$, 
$f_0^2(H')$ is a horizontal surface in $\xx_0$ and 
\begin{equation}\label{eqn_Leb_P}
\mathrm{Leb}_{f_0^2(H')}(f_0^2(A'))=|a_2a_4|\mathrm{Leb}_{H'}(A').
\end{equation}

We denote by $\mathrm{proj}:\mathbb{R}^3\longrightarrow \mathbb{R}^2$ the orthogonal projection to the $xz$-plane, that is, $\mathrm{proj}(x,y,z)=(x,z)$.
For any subset $K$ of $\mathbb{R}^3$, we set $\mathrm{proj}(K)=\widehat K$.

From now on, we work under the following extra assumptions.

\subsection*{Extra Assumptions}
The intersection $f_0(\xx_2)\cap \mathbb{B}$ is empty.
There exists an arc $\alpha$ in $M$ with $\mathrm{Int}\,\alpha\subset W^{\rm u}(p_{f_0},f_0)$ 
and connecting $p_{f_0}$ with an attracting fixed point  
$s_0$ of $f_0$ outside $\mathbb{B}$.
The sequence 
$f_0^n(\yy\cup \mathbb{S})$ converges to $\alpha$ as $n\to \infty$ with respect to the Hausdorff metric.
See Figure \ref{f_Yf0}. 
\begin{figure}[hbtp]
\centering
\scalebox{0.6}{\includegraphics[clip]{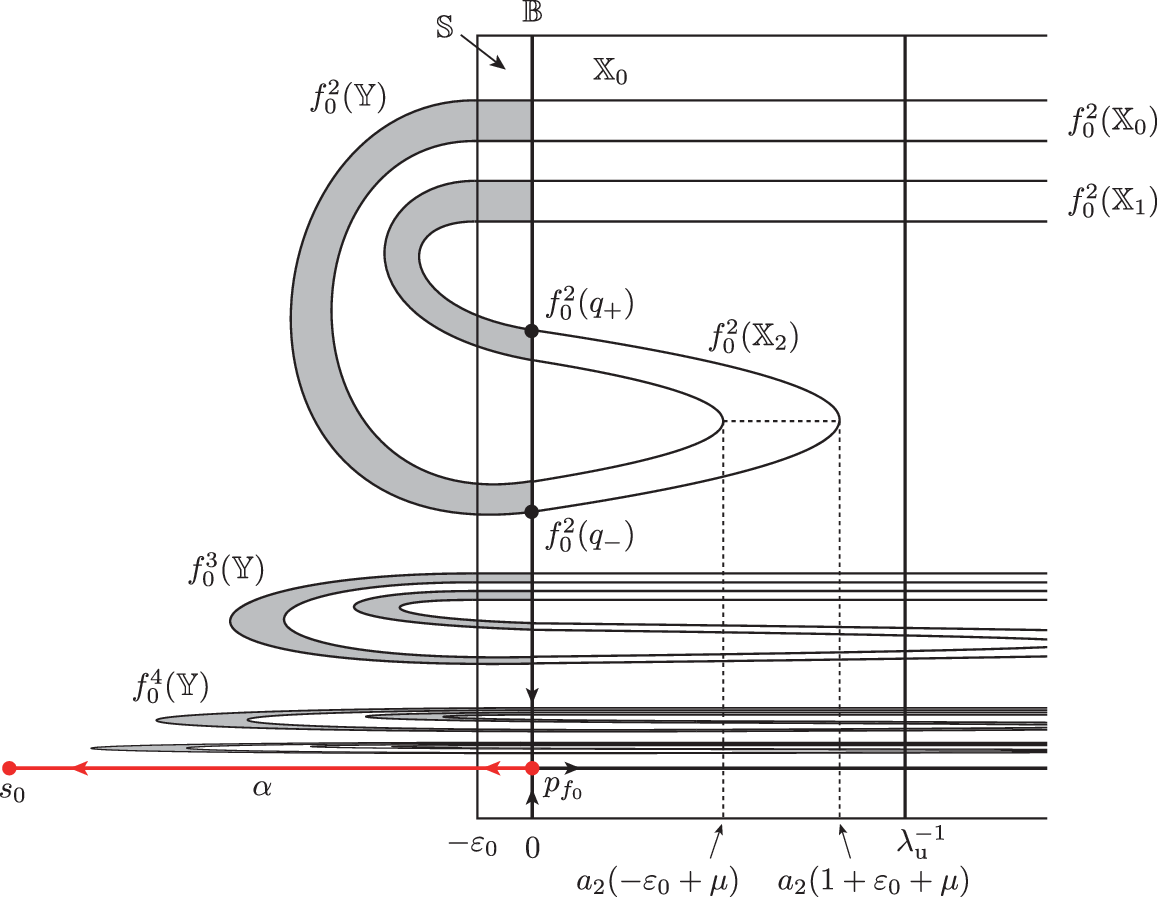}}
\caption{Schematic figure just presenting the connections of $f_0^2(\xx_i)$ $(i=0,1,2)$ and $f_0^2(\yy)$. 
They do not indicate their exact placements in $M$.
}
\label{f_Yf0}
\end{figure}

\section{Diffeomorphisms close to $f_0$}\label{S_near_f0}

\subsection{Extensions of $f_0|_{\mathbb{X}_0}$, $f_0|_{\mathbb{X}_1}$, 
$f_0^2|_{\mathbb{X}_2}$ to diffeomorphisms on $\mathbb{R}^3$}

By \eqref{eqn_f0V}, each component of $\mathrm{Fr}(\mathbb{X}_i)$ $(i=0,1)$ is a component of $\smfd(p_{f_0},f_0)\cap \mathbb{B}$.
Since $f_0^2(\mathrm{Fr}(\mathbb{X}_2))$ is contained in $\{0\}\times I_{\varepsilon_0}^2\subset \smfd(p_{f_0},f_0)$, $\mathrm{Fr}(\mathbb{X}_2)$ is also a union of two  
components of $\smfd(p_{f_0},f_0)\cap \mathbb{B}$.

Consider a small neighborhood $\mathcal{U}_0$ of $f_0$ in $\mathrm{Diff}^r(M)$.
For any $g\in \mathcal{U}_0$, we denote by $p_g$ the saddle fixed point of $g$ obtained as the continuation of $p_{f_0}$.
For $i=0,1,2$, let $\xx_{g,i}$  be the compact subset of $\mathbb{B}$ such that $\mathrm{Fr}(\xx_{g,i})\subset \smfd(p_g,g)\cap \mathbb{B}$ is the continuation of $\mathrm{Fr}(\xx_i)$.
The compact subset $\yy_g$ of $\mathbb{B}$ corresponding to $\yy$ 
is defined similarly.
We denote by $\mathbb{S}_g$ the closure of the union of components of $\mathbb{B}\setminus (\xx_{g,0}\cup \xx_{g,1})$ disjoint from $\xx_{g,2}$.
One can suppose that $\yy_g\cup \mathbb{S}_g$ still satisfies Extra Assumptions with respect to $g$ instead of $f_0$ 
if necessary by replacing $\mathcal{U}_0$ with a smaller neighborhood of $f_0$.

For $i=0,1$, we consider the diffeomorphism $\varphi_i:\mathbb{R}^3\longrightarrow \mathbb{R}^3$ which has the same form as $f_0|_{\vv_i}$ in \eqref{eqn_f0V} for \emph{any} $(x,y,z)\in \mathbb{R}^3$.
In particular, $\varphi_i|_{\xx_i}=f_0|_{\xx_i}$ holds.
Note that $\varphi_0$ is a linear map on $\mathbb{R}^3$ and $\varphi_1$ 
is an affine map fixing $(\lambda_{\mathrm u}(1+\lambda_{\mathrm u})^{-1},
(1+\lambda_{\mathrm s})^{-1},1)$, 
each of which preserves horizontal surfaces in $\mathbb{R}^3$.

Let $\eta:\mathbb{R}\longrightarrow \mathbb{R}$  be the unimodal $C^1$-function defined as 
\[
\eta(x)=\begin{cases}
2a_1\left(\dfrac12+\varepsilon_0\right)x+a_1\left(\dfrac12+\varepsilon_0\right)\!\left(-\dfrac12+\varepsilon_0\right) &
\text{if $x<-\varepsilon_0$},\\[4pt]
-a_1\left(x-\dfrac12\right)^2&\text{if $-\varepsilon_0\leq x\leq 1+\varepsilon_0$},\\[4pt]
-2a_1\left(\dfrac12+\varepsilon_0\right)x+a_1\left(\dfrac12+\varepsilon_0\right)\!\left(\dfrac32+\varepsilon_0\right) &
\text{if $x>1+\varepsilon_0$}.
\end{cases}
\]
From the definition, we have
\begin{equation}\label{eqn_d_eta}
\left|\frac{d\eta}{dx}(x)\right|\leq 2a_1\left(\frac12+\varepsilon_0\right)
\end{equation}
for  any $x\in\mathbb{R}$.
By using the function $\eta$, we define the $C^1$-diffeomorphism $\varphi_2:\mathbb{R}^3\longrightarrow \mathbb{R}^3$ by    
\begin{equation}\label{eqn_phi_2}
\varphi_{2}(x,y,z)=\left(\eta(x)+a_{2}(z+\mu),\ a_{3}\Bigl(y-\frac{1}{2}\Bigr)+\frac{1}{2},\ 
a_{4}\Bigl(x-\frac{1}{2}\Bigr)+\frac{1}{2} \right).
\end{equation}
By \eqref{eqn_tangency}, $\varphi_2|_{\xx_2}$ is equal to $f_0^2|_{\xx_2}$.
From \eqref{eqn_d_eta}, we know that the $(1,1)$ entry of $D\varphi_2(\bx)$ $(\bx\in \mathbb{R}^3)$ is  uniformly bounded unlike that of \eqref{eqn_DfX2}.
This is the reason why we take the form \eqref{eqn_phi_2} of $\varphi_2$ different from \eqref{eqn_tangency} of $f_0^2|_{\xx_2}$ on $\mathbb{R}^3\setminus \xx_2$. 
Note that $\varphi_2$ also preserves horizontal surfaces in $\mathbb{R}^3$.

\subsection{Almost horizontal surfaces in $\mathbb{R}^3$}
To prove Theorem \ref{thm_A}, for any $g \in \mathrm{Diff}^r(M)$ sufficiently $C^r$-close to $f_0$, one must examine every subset $D_g \subset M$ of positive Lebesgue measure satisfying condition \eqref{D2} in Definition \ref{describable}.
Since such subsets lack an explicit characterization, it is generally impossible to determine directly whether the 3-dimensional Lebesgue measure of $g^n(D_g)$ diverges as $n \to \infty$. 
To circumvent this difficulty, we instead consider a certain horizontal section $A$ of $D_g$ and demonstrate that its 2-dimensional Lebesgue measure diverges to infinity. 
As $n \to \infty$, the geometry of $g^n(A)$ typically becomes highly complex. 
To address this issue, we investigate a sequence of almost horizontal surfaces containing $g^n(A)$, rather than analyzing the iterates $g^n(A)$ directly.

Let $\varepsilon$ be a positive number sufficiently smaller than $\varepsilon_0$, which will be 
fixed in the proof of Lemma \ref{l_ea_horizontal}.
We consider the $\mathrm{uc}$-cone-field 
\[
\boldsymbol{C}_{\varepsilon}^{\mathrm{uc}}(\boldsymbol{x})=
\left\{
\boldsymbol{v}=(v^{\mathrm{u}},v^{\mathrm{s}},v^{\mathrm{cs}})\in T_{\boldsymbol{x}}(\mathbb{R}^3)\,;\,
|v^{\mathrm{s}}|\leq\varepsilon\sqrt{(v^{\mathrm{u}})^2+(v^{\mathrm{cs}})^2}
\right\}
\]
on $\mathbb{R}^3$.
A surface $S$ in $\mathbb{R}^3$ is said to be $\varepsilon$-\emph{almost horizontal} if, for any 
$\bx\in S$, $T_{\bx}S$ is contained in $\boldsymbol{C}_{\varepsilon}^{\mathrm{uc}}(\bx)$.
Furthermore such a surface $S$ is called \emph{proper} 
if the restriction $\mathrm{proj}|_S:S\longrightarrow \mathbb{R}^2$ is a proper map onto $\mathbb{R}^2$, that is, 
$(\mathrm{proj}|_S)^{-1}(K)$ is compact for any compact subset $K$ of $\mathbb{R}^2$.
This condition is necessary to avoid the situation where the image $\widehat A$ is multi-folded in $\mathbb{R}^2$ for a subset $A$ of $S$.
In fact, since $\mathrm{proj}|_S$ is locally homeomorphic for any $\varepsilon$-almost horizontal proper surface $S$,  it follows from the standard fact of 
covering theory that $\mathrm{proj}|_S:S\longrightarrow \mathbb{R}^2$ is a surjective homeomorphism.
The $\varepsilon$-almost horizontality of $S$ will be used to show the second inequality of \eqref{eqn_Leb_A} below.

Fix connected and mutually disjoint open neighborhoods $\mathbb{U}_i$ of 
$\xx_i$ $(i=0,1,2)$ in $(-\varepsilon_0,1+\varepsilon_0)\times (-2\varepsilon_0,1+2\varepsilon_0)^2$.
We may assume that, for any $g\in \mathcal{U}_0$, $\mathbb{U}_i$ contains $\xx_{g,i}$ and 
$\{\uu_0,\uu_1\}$ is a coding pair for $\Lambda_g$.
Moreover one can suppose that $g(\mathbb{U}_i)$ $(i=0,1)$ is contained in $\mathbb{B}$.
There exist diffeomorphisms $\varphi_{g,i}:\mathbb{R}^3\longrightarrow \mathbb{R}^3$ such that 
$\varphi_{g,i}|_{\xx_{g,i}}=g|_{\xx_{g,i}}$ for $i=0,1$, 
$\varphi_{g,2}|_{\xx_{g,2}}=g^2|_{\xx_{g,2}}$ 
and 
$\varphi_{g,j}|_{\mathbb{R}^3\setminus \mathbb{U}_j}=\varphi_j|_{\mathbb{R}^3\setminus \mathbb{U}_j}$ for $j=0,1,2$.
By taking $\mathcal{U}_0$ sufficiently small, we may assume that $\varphi_{g,i}$ $(i=0,1)$ is arbitrarily $C^r$-close to $\varphi_i$ and $\varphi_{g,2}$ is arbitrarily $C^1$-close to $\varphi_2$.

\begin{lem}\label{l_ea_horizontal}
There exists a neighborhood $\mathcal{U}_0$ of $f_0$ such that, 
for any $\varepsilon$-almost horizontal proper surface $S$ in $\mathbb{R}^3$, $g\in\mathcal{U}_0$ and $i=0,1,2$, 
the surface 
$\varphi_{g,i}(S)$ is also $\varepsilon$-almost horizontal and proper.
\end{lem}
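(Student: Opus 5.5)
I will first prove the statement for the unperturbed maps $\varphi_0,\varphi_1,\varphi_2$ with a \emph{strict} margin, and then obtain the perturbed maps $\varphi_{g,i}$ by $C^1$-closeness. The key observation is that all three maps have block form with respect to the splitting $(u,cs)\oplus(s)$. Here $v^{\mathrm{u}},v^{\mathrm{cs}}$ are the $x,z$ components and $v^{\mathrm s}$ is the $y$ component. Indeed $D\varphi_0=\mathrm{diag}(\lambda_{\rm u},\lambda_{\rm ss},\lambda_{\rm cs0})$, $D\varphi_1=\mathrm{diag}(-\lambda_{\rm u},-\lambda_{\rm ss},\lambda_{\rm cs1})$, and by \eqref{eqn_phi_2}
\[
D\varphi_2(\bx)=\begin{pmatrix}\eta'(x)&0&a_2\\ 0&a_3&0\\ a_4&0&0\end{pmatrix}.
\]
So for $\bv=(v^{\mathrm u},v^{\mathrm s},v^{\mathrm{cs}})$ the $y$-component of the image is $c\,v^{\mathrm s}$ with $|c|\in\{\lambda_{\rm ss},|a_3|\}$. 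The $(x,z)$-component is $L(v^{\mathrm u},v^{\mathrm{cs}})$ for a $2\times2$ matrix $L$.

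The central step is a uniform lower bound $\|L w\|\ge m_i\|w\|$ with $|c|<m_i$. For $i=0$ we have $m_0=\lambda_{\rm cs0}>\lambda_{\rm ss}$, and for $i=1$ similarly $m_1=\lambda_{\rm cs1}>\lambda_{\rm ss}$. For $i=2$ the matrix is $L=\begin{pmatrix}\eta'&a_2\\ a_4&0\end{pmatrix}$. Its smallest singular value is $\sigma_{\min}=|\det L|/\sigma_{\max}\ge |a_2a_4|/\|L\|$. Using \eqref{eqn_d_eta} and $\varepsilon_0$ small, $\|L\|\lesssim\sqrt{4a_1^2(\tfrac12+\varepsilon_0)^2+a_2^2+a_4^2}$, roughly $\sqrt{4a_1^2+a_2^2}$ up to the $a_4$ term. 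One then bounds $\sigma_{\min}$ from below, probably by testing $\|Lw\|$ in the two regimes $|w^{\mathrm{cs}}|$ large or small relative to $|w^{\mathrm u}|$. Comparing with \eqref{eqn_min} should give $|a_3|<\sigma_{\min}$ uniformly in $x$, since $\eta'$ is globally bounded. This is exactly why \eqref{eqn_min} has the two terms $a_2|a_4|/\sqrt{4a_1^2+a_2^2}$ and $a_1a_2(1-2\varepsilon_0)/\sqrt{4a_1^2+a_2^2}$. Matching these constants precisely is where I expect the main computational obstacle. If the singular-value estimate is too crude, I would instead estimate $\|Lw\|^2=(\eta' w^{\mathrm u}+a_2w^{\mathrm{cs}})^2+a_4^2(w^{\mathrm u})^2$ directly, splitting by the sign and size of $\eta'$.

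Given $\kappa_i:=|c|/m_i<1$, any $\bv\in \boldsymbol C^{\mathrm{uc}}_\varepsilon$ maps to a vector with $|(\text{new }v^{\mathrm s})|\le |c|\varepsilon\|w\|\le\kappa_i\varepsilon\|Lw\|$. Hence $D\varphi_i$ maps $\boldsymbol C^{\mathrm{uc}}_\varepsilon$ into $\boldsymbol C^{\mathrm{uc}}_{\kappa_i\varepsilon}$. Now fix $\varepsilon$ small, and choose $\mathcal U_0$ so small that $\|D\varphi_{g,i}-D\varphi_i\|_{C^0}$ is small compared with $(1-\kappa_i)\varepsilon m_i$. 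This is possible because $\varphi_{g,i}=\varphi_i$ outside the bounded set $\mathbb U_i$, so the $C^1$-closeness is uniform on $\mathbb R^3$. The perturbed derivative then still maps $\boldsymbol C^{\mathrm{uc}}_\varepsilon$ into itself, so $\varphi_{g,i}(S)$ is $\varepsilon$-almost horizontal, since $T\varphi_{g,i}(S)=D\varphi_{g,i}(TS)$.

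For properness, set $\pi_i:=\mathrm{proj}\circ\varphi_{g,i}\circ(\mathrm{proj}|_S)^{-1}:\mathbb R^2\to\mathbb R^2$. Here $(\mathrm{proj}|_S)^{-1}$ is a homeomorphism, as noted before the lemma. We must show $\pi_i$ is a proper surjection. Off the compact set $\mathrm{proj}(\mathbb U_i)$, $\pi_i$ is an affine map: $L$ for $i=0,1$, and for $i=2$ the map $(x,z)\mapsto(\eta(x)+a_2(z+\mu),a_4(x-\tfrac12)+\tfrac12)$, which is affine outside a compact $x$-range. So $\pi_i$ differs from a homeomorphism of $\mathbb R^2$ by a bounded amount and is a local diffeomorphism, because its derivative is $L$ plus small terms and is invertible by the singular-value bound. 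A local homeomorphism at bounded distance from a proper map is proper, hence a covering map of $\mathbb R^2$, hence a homeomorphism. Thus $\mathrm{proj}|_{\varphi_{g,i}(S)}$ is proper and onto.
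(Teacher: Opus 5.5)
Your proposal is correct and is essentially the paper's proof. The paper likewise writes $D\varphi_{g,i}$ as the block-diagonal $D\varphi_i$ plus $O(\varepsilon^2)$ terms and compares $\lambda_{\rm ss}$ with $\lambda_{{\rm cs}i}$ for $i=0,1$. For $i=2$ it uses exactly your two-regime split at $|v^{\rm u}|=\frac{a_2}{2a_1}|v^{\rm cs}|$. In one regime it bounds $|\widehat{v}^{\rm cs}|$ below by $a_2|a_4|/\sqrt{4a_1^2+a_2^2}$, and in the other it bounds $|\widehat{v}^{\rm u}|$ below by $a_1a_2(1-2\varepsilon_0)/\sqrt{4a_1^2+a_2^2}$, which is precisely what \eqref{eqn_min} is designed for. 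Your first, cruder bound $\sigma_{\min}\geq a_2|a_4|/\sqrt{(\eta')^2+a_2^2+a_4^2}$ would in fact also close the argument, since \eqref{eqn_min} forces it to exceed $|a_3|$ as soon as $\varepsilon_0\leq 1/2$. The paper only asserts properness from $\varphi_{g,i}=\varphi_i$ off $\mathbb{U}_i$, and your bounded-distance-plus-covering argument is a correct way of filling in that step.
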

\begin{proof}
First we consider the case of $i=0,1$.
By \eqref{eqn_f0V}, one can choose $\mathcal{U}_0$ so that 
\begin{equation}\label{eqn_Dphi_i}
D\varphi_{g,i}(\bx)=
\begin{pmatrix} (-1)^i\lambda_{\rm u}+\alpha_{11}(\bx)& \alpha_{12}(\bx)& \alpha_{13}(\bx)\\
\alpha_{21}(\bx)&(-1)^i \lambda_{\rm ss}+\alpha_{22}(\bx)& \alpha_{23}(\bx)\\
\alpha_{31}(\bx)& \alpha_{32}(\bx) &\lambda_{{\rm cs}i}+\alpha_{33}(\bx)
\end{pmatrix}
\end{equation}
holds for any $g\in\mathcal{U}_0$ and $\bx=(x,y,z)\in \mathbb{R}^3$,  
where each $\alpha_{kl}:\mathbb{R}^3\longrightarrow \mathbb{R}$ is a continuous function supported on $\mathbb{U}_i$ and satisfying  
$\sup\bigl\{|\alpha_{kl}(\bx)|\, ;\, \bx \in \mathbb{R}^3\bigr\}=O(\varepsilon^2)$.
Here we denote by $O(\varepsilon^2)$ a non-negative constant with $\limsup\limits_{\varepsilon\to +0}
O(\varepsilon^2)/\varepsilon^2<\infty$.
The definition of $\boldsymbol{C}_{\varepsilon}^{\mathrm{uc}}(\boldsymbol{x})$ 
implies $|v^{\mathrm{s}}|\leq \varepsilon$ for any $\bv=(v^{\mathrm{u}},v^{\mathrm{s}},v^{\mathrm{cs}})\in 
\boldsymbol{C}_{\varepsilon}^{\mathrm{uc}}(\boldsymbol{x})$ with 
$(v^{\mathrm{u}})^2+(v^{\mathrm{cs}})^2=1$.
We set $D\varphi_{g,i}(\bx)(v^{\rm u},v^{\rm s},v^{\rm cs})^T
=(\widehat{v}^{\rm u},\widehat{v}^{\rm s},\widehat{v}^{\rm cs})^T$, 
where $(v_1,v_2,v_3)^T$ denotes the transposed column vector of a row vector $(v_1,v_2,v_3)$.
By \eqref{eqn_Dphi_i}, 
\[
|\widehat{v}^{\rm s}|\leq \lambda_{\rm ss}|v^{\rm s}|+O(\varepsilon^2)
\leq \varepsilon\lambda_{\rm ss}+O(\varepsilon^2).
\]
Moreover, 
\begin{align*}
\varepsilon\sqrt{(\widehat{v}^{\rm u})^2+(\widehat{v}^{\rm cs})^2}
&\geq 
\varepsilon\sqrt{\lambda_{\rm u}^2(v^{\rm u})^2+\lambda_{{\rm cs}i}^2(v^{\rm cs})^2}-O(\varepsilon^2)\\
&>\varepsilon\lambda_{{\rm cs}i}\sqrt{(v^{\rm u})^2+(v^{\rm cs})^2}-O(\varepsilon^2)\\
&
=\varepsilon\lambda_{{\rm cs}i}-O(\varepsilon^2).
\end{align*}
By \eqref{eqn_eigen_v},   
$|\widehat{v}^{\rm s}|\leq \varepsilon\sqrt{(\widehat{v}^{\rm u})^2+(\widehat{v}^{\rm cs})^2}$ holds 
if we take $\varepsilon$ sufficiently small.
It follows that $D\varphi_{g,i}(\bx)\bv\in \boldsymbol{C}_{\varepsilon}^{\mathrm{uc}}(\boldsymbol{x})$.

In the case of $i=2$, by \eqref{eqn_phi_2}, we have 
\begin{equation}\label{eqn_Dphi2}
D\varphi_{g,2}(\bx)=\begin{pmatrix} \dfrac{d\eta}{dx}(x)+\beta_{11}(\bx)&\beta_{12}(\bx)&a_2+\beta_{13}(\bx)\\
\beta_{21}(\bx)&a_3+\beta_{22}(\bx)&\beta_{23}(\bx)\\
a_4+\beta_{31}(\bx)&\beta_{32}(\bx)&\beta_{33}(\bx)\end{pmatrix},
\end{equation}
where each $\beta_{kl}:\mathbb{R}^3\longrightarrow \mathbb{R}$ is a continuous function supported on $\mathbb{U}_2$ and satisfying  
$\sup\bigl\{|\beta_{kl}(\bx)|\, ;\, \bx \in \mathbb{R}^3\bigr\}=O(\varepsilon^2)$.
We set again $D\varphi_{g,2}(\bx)(v^{\rm u},v^{\rm s},v^{\rm cs})^T
=(\widehat{v}^{\rm u},\widehat{v}^{\rm s},\widehat{v}^{\rm cs})^T$.
As in the previous case, 
\begin{equation}\label{eqn_vs_a3}
|\widehat{v}^{\rm s}|\leq \varepsilon |a_3|+O(\varepsilon^2)
\end{equation}
for any $\bv=(v^{\mathrm{u}},v^{\mathrm{s}},v^{\mathrm{cs}})\in 
\boldsymbol{C}_{\varepsilon}^{\mathrm{uc}}(\boldsymbol{x})$ with 
$(v^{\mathrm{u}})^2+(v^{\mathrm{cs}})^2=1$.

First we consider the case when $|v^{\rm u}|\geq \dfrac{a_2}{2a_1}|v^{\rm cs}|$ or equivalently 
$|v^{\rm u}|\geq \dfrac{a_2}{\sqrt{4a_1^2+a_2^2}}$.
By \eqref{eqn_min} and \eqref{eqn_Dphi2}, it holds that 
\[
|\widehat{v}^{\rm cs}|\geq |a_4v^{\rm u}|-O(\varepsilon^2)\geq \dfrac{a_2|a_4|}{\sqrt{4a_1^2+a_2^2}}-O(\varepsilon^2)>|a_3|
\]
if $\varepsilon$ is taken sufficiently small.
By \eqref{eqn_vs_a3}, we may also assume that 
\begin{equation}\label{eqn_vcs>v3}
\varepsilon |\widehat{v}^{\rm cs}|>|\widehat{v}^{\rm s}|.
\end{equation}

Next we suppose that $|v^{\rm u}|<\dfrac{a_2}{2a_1}|v^{\rm cs}|$ or equivalently 
$|v^{\rm cs}|> \dfrac{2a_1}{\sqrt{4a_1^2+a_2^2}}$.
Again by \eqref{eqn_min} and \eqref{eqn_d_eta}, \eqref{eqn_Dphi2}, 
\begin{align*}
|\widehat{v}^{\rm u}|&\geq a_2|v^{\rm cs}|-\left|\frac{d\varphi}{dx}(x)v^{\rm u}\right|-O(\varepsilon^2)
\geq a_2|v^{\rm cs}|-a_2\left(\frac12+\varepsilon_0\right)|v^{\rm cs}|-O(\varepsilon^2)\\
&> a_2\left(\frac12-\varepsilon_0\right)\dfrac{2a_1}{\sqrt{4a_1^2+a_2^2}}-O(\varepsilon^2)
=\dfrac{a_1a_2}{\sqrt{4a_1^2+a_2^2}}(1-2\varepsilon_0)-O(\varepsilon^2)\\
&>|a_3|-O(\varepsilon^2).
\end{align*}
Thus we have 
\begin{equation}\label{eqn_vu>v3}
\varepsilon |\widehat{v}^{\rm u}|>|\widehat{v}^{\rm s}|
\end{equation}
for any sufficiently small $\varepsilon$. 
By \eqref{eqn_vcs>v3} and \eqref{eqn_vu>v3}, 
$|\widehat{v}^{\rm s}|\leq \varepsilon\sqrt{(\widehat{v}^{\rm u})^2+(\widehat{v}^{\rm cs})^2}$.
It follows that  
$D\varphi_{g,2}(\bx)\bv\in \boldsymbol{C}_{\varepsilon}^{\mathrm{uc}}(\boldsymbol{x})$.

Since $\varphi_{g,i}=\varphi_i$ on $\mathbb{R}^3\setminus \uu_i$ for $i=0,1,2$, 
the restriction 
$\mathrm{proj}|_{\varphi_{g,i}(S)}:\varphi_{g,i}(S)\longrightarrow \mathbb{R}^2$ is proper and surjective.
Hence $\varphi_{g,i}(S)$ is an $\varepsilon$-almost horizontal proper surface.
This completes the proof.
\end{proof}

\section{Proof of Theorem \ref{thm_A}}\label{S_proof_of_thmA}

A subset $A$ of $\mathbb{R}^3$ is $\varepsilon$-\emph{almost horizontal} if $A$ is contained in 
an $\varepsilon$-almost horizontal proper surface $S$ in $\mathbb{R}^3$.
If such a subset $A$ is Lebesgue measurable in $S$, then by Lemma \ref{l_ea_horizontal} 
\begin{equation}\label{eqn_Leb_A}
\mathrm{Leb}_S(A)\geq \mathrm{Leb}\,(\widehat A)\geq \frac{\mathrm{Leb}_S(A)}{\sqrt{1+\varepsilon^2}}
=\mathrm{Leb}_S(A)(1-O(\varepsilon^2)).
\end{equation}
Here $\mathrm{Leb}_S(\cdot)$ denotes the Lebesgue measure on $S$ with respect to 
the Riemannian metric on $S$ induced from the Euclidean metric on $\mathbb{R}^3$ 
and $\mathrm{Leb}\,(\cdot)$ does the standard Lebesgue measure on $\mathbb{R}^2$.

Recall that, for any $g\in \mathcal{U}_0$, the continuation $\Lambda_g$ of $\Lambda_{f_0}$ is associated with the coding pair $\{ \mathbb{U}_0, \mathbb{U}_1 \}$ given in Section \ref{S_near_f0}.
Let $\mathcal{I}_g: \Lambda_{g}  \longrightarrow  \{ 0,1\} ^{\mathbb Z}$ be the coding map 
determined from the pair. 
We say that an element $\bx_g$ of $\Lambda_g$ with $\mathcal{I}_g(\bx_g)=(v_j)_{j\in \mathbb{Z}}$ satisfies the \emph{asymptotically 1-filling condition} if 
\begin{equation}\label{eqn_1-filling}
\lim_{n\to \infty}\frac{\#\left\{j\,;\,0\leq j\leq n-1,\ v_j=1\right\}}{n}=1.
\end{equation}
We note that the subset of $\Lambda_g$ consisting of elements with the asymptotically 1-filling  
condition is dense in $\Lambda_g$.

\begin{remark}\label{r_ab+2}
As in the proof of \cite[Lemma 2.2]{KNS}, we may assume that $\alpha_k$ and $\beta_k$ in Definition \ref{dfn1} satisfy  
\begin{equation}\label{eqn_ab+2}
\alpha_k+\beta_k+2\leq \alpha_{k+1}
\end{equation}
for any $k\geq 1$.
In the case when $\alpha_k+\beta_k+1=\alpha_{k+1}$, we consider the new interval 
$[\alpha_k,\beta_{k+1}]\cap \mathbb{Z}$ instead of $\mathbb{I}_k\cup \mathbb{I}_{k+1}$. 
Repeating the construction, we have a sequence of intervals, still denoted by $(\mathbb{I}_k)$,  which satisfies not only \eqref{D1}, \eqref{D2} but also \eqref{eqn_ab+2}.
When $(\mathbb{I}_k)$ consists of only finitely many intervals, 
the last entry $\mathbb{I}_{k_0}$ is a half open interval $[\alpha_{k_0},\infty)\cap 
\mathbb{Z}$.
Then we split $\mathbb{I}_{k_0}$ into infinitely many intervals such that
$\mathbb{I}_{k_0}^{\mathrm{new}}=[\alpha_{k_0},\alpha_{k_0}+2]\cap \mathbb{Z}$ and 
$\mathbb{I}_{k_0+i}^{\mathrm{new}}=[\alpha_{k_0}+2^{i+1},\alpha_{k_0}
+2^{i+2}-2]\cap\mathbb{Z}$ for $i\geq 1$, 
which still satisfies \eqref{D1}, \eqref{D2} and \eqref{eqn_ab+2}.
\end{remark}

\begin{proof}[Proof of Theorem \ref{thm_A}]
The proof is done by contradiction.
Suppose that there exists an element $g_*$ of $\mathcal{U}_0$ which is strongly 
pluripotent for the continuation $\Lambda_{g_*}$ of the whole $\Lambda_{f_0}$.
Take an element $\bx_*\in \Lambda_{g_*}$ with asymptotically 1-filling condition.
We set $\mathcal{I}_{g_*}(\bx_*)=(v_j)_{j\in \mathbb{Z}}$ and $\Sigma=\{v\}$ 
for $v=(v_j)_{j\in \mathbb{N}_0}$.
Since $g_*$ is strongly pluripotent for the subset $\mathcal{I}_{g_*}^{-1}(\widehat\Sigma)$ of the whole $\Lambda_{g_*}$, 
$g_*$ is $\Sigma$-describable by Theorem \ref{p-lemma}, that is,  
there exist $g\in \mathcal{U}_0$ arbitrarily $C^r$-close to $g_*$,  
a subset $D_g$ of $M$ with positive Lebesgue measure and 
non-negative integers $\alpha_k$, $\beta_k$ satisfying \eqref{D1}, \eqref{D2} 
and \eqref{eqn_ab+2}.

By using \eqref{D1}, one can show that $\beta_{k}\neq 0$ for infinitely many $k\in \mathbb{N}$.
In fact, if $\beta_{k}=0$ for all but finitely many $k\in \mathbb{N}$, then by \eqref{eqn_ab+2} 
we would have 
\[
\limsup _{n\rightarrow \infty} \frac{\#\left\{j\,;\, 0\le j \le n-1,\ j \in \bigcup _{k=1}^\infty \mathbb{I}_k\right\}}{n}\leq \frac12,
\]
which contradicts \eqref{D1}.
In particular we may assume that $\beta_1\geq 1$ by regarding $k-k_0+1$ as a new $k$ for some $k_0$ with 
$\beta_{k_0}\neq 0$.
Since $g^{\alpha_1}(D_g)\subset \uu_a$ and $g^{\alpha_1+1}(D_g)\subset \uu_b$ for some $a,b\in \{0,1\}$, 
$g^{\alpha_1+1}(D_g)$ is contained in $g(\uu_a)\cap \uu_b\subset \mathbb{B}$, 
where $\mathbb{U}_i$ $(i=0,1)$ is the open neighborhood of $\xx_{g,i}$ in $M$ given in Section \ref{S_near_f0}.
Let $\mathbb{T}_g$ be the union $\xx_{g,0}\cup \xx_{g,1}\cup \xx_{g,2}\cup g(\xx_{g,2})$. 
For any $j\geq \alpha_1+1$, we will see that $g^j(D_g)$ is a subset of $\mathbb{T}_g$.
If not, there would exist $j\geq \alpha_1+1$ with $g^j(D_g)\setminus \mathbb{T}_g\neq \emptyset$.
We denote by $j_0$ the smallest integer among such $j$. 
Then $g^{j_0}(D_g)$ contains an element $\by$ of $\yy_g\cup \mathbb{S}_g\setminus 
\mathrm{Fr}(\yy_g\cup \mathbb{S}_g)$, 
which is obvious when $j_0=\alpha_1+1$ and proved by using the fact $g^{j_0-1}(D_g)\subset \mathbb{T}_g$ 
when $j_0>\alpha_1+1$.
By Extra Assumptions with respect to $g$, $(g^{j_0+n}(\by))_{n\geq 0}$ converges to the attracting fixed point $s_{g,0}$ of $g$ which is the continuation of $s_0$.
On the other hand, by \eqref{D2}, $g^{\alpha_k-j_0}(\by)\in g^{\alpha_k}(D_g)$ 
is contained in either $\mathbb{U}_{0}$ or $\mathbb{U}_{1}$ for any $\alpha_k>j_0$, a contradiction.
Thus we have $g^j(D_g)\subset \mathbb{T}_g$ for any $j\geq \alpha_1+1$.
This implies that $g^{\alpha_k}(D_g)$ is in either $\xx_{g.0}$ or $\xx_{g,1}$ for any $k\geq 2$.

By Fubini's Theorem, there exists $y_0\in (-\varepsilon_0,1+\varepsilon_0)$ such that 
the horizontal section $A_{\alpha_2}=g^{\alpha_2}(D_g)\cap \{(x,y,z)\in\mathbb{B}\,;\, y=y_0\}$ has positive 2-dimensional Lebesgue measure.

We set
\[\gamma_k=\alpha_{k+1}-(\alpha_k+\beta_k)\]
and define inductively subsets $A_j$ $(j> \alpha_2)$ of $M$ each of which is contained in a component of $\mathbb{T}_g$.

First suppose that $A_j$ $(\alpha_2\leq j\leq \alpha_k)$ are already defined for some $k\geq 2$.
\begin{enumerate}[(1)]
\item
Let $A_j=g^{j-\alpha_k}(A_{\alpha_k})$ for $j=\alpha_k+1,\dots,\alpha_k+\beta_k$, 
which are contained in either $\mathbb{X}_{g,0}$ or $\mathbb{X}_{g,1}$.
\item
If $A_{\alpha_k+\beta_k+j-1}$ is contained in $\xx_{g,i}$ $(i=0,1)$ 
for some $j$ with $1\leq j< \gamma_k$, then 
$g(A_{\alpha_k+\beta_k+j-1})$ is a disjoint  union of $C_{\alpha_k+\beta_k+j;i}=
g(A_{\alpha_k+\beta_k+j-1})\cap \mathbb{X}_{g,i}$ $(i=0,1,2)$.
Possibly some of them are empty.
We suppose that $A_{\alpha_k+\beta_k+j}$ is one of $C_{\alpha_k+\beta_k+j;i}$ with 
$\mathrm{Leb}\,(\widehat C_{\alpha_k+\beta_k+j;i})\geq \mathrm{Leb}\,(\widehat C_{\alpha_k+\beta_k+j;s})
$ 
for any $s\in \{0,1,2\}\setminus \{i\}$.
In particular, 
\[
\mathrm{Leb}\,(\widehat A_{\alpha_k+\beta_k+j})\geq \frac13\mathrm{Leb}\bigl(\widehat{g(A_{\alpha_k+\beta_k+j-1})}\bigr).
\]
\item
If 
$A_{\alpha_k+\beta_k+j-1}$ is contained in $\xx_{g,2}$ for some $j$ with $1\leq j< \gamma_k-1$, then we set 
$A_{\alpha_k+\beta_k+j}=g(A_{\alpha_k+\beta_k+j-1})$ and $A_{\alpha_k+\beta_k+j+1}=g^2(A_{\alpha_k+\beta_k+j-1})$.
\end{enumerate}
Let $\gamma_k^{(3)}$ denote the number of integers $j$ with $1 \le j < \gamma_k - 1$ for which (3) holds.

By Lemma \ref{l_ea_horizontal}, any $A_j$ in $\mathbb{T}_g\setminus g(\xx_{g,2})$ is $\varepsilon$-almost horizontal.
By \eqref{eqn_Leb_P0}, \eqref{eqn_Leb_P} and \eqref{eqn_Leb_A}, there exist constants 
\[
\rho_i=\lambda_{{\rm cs}i}\lambda_{\rm u}-O(\varepsilon^2)\quad (i=0,1),\quad 
\rho_2=|a_2a_4|-O(\varepsilon^2)
\]
such that 
\begin{equation}\label{eqn_LebA}
\begin{split}
\mathrm{Leb}\,(\widehat A_{j+1})&\geq \rho_i\mathrm{Leb}\,(\widehat A_{j})\\
&\qquad\text{if $\alpha_k\leq j\leq \alpha_k+\beta_k-1$ and $A_{j}\subset \mathbb{X}_{g,i}$ $(i=0,1)$},\\
\mathrm{Leb}\,(\widehat A_{j+1})&\geq \frac13\rho_i\mathrm{Leb}\,(\widehat A_{j})\\
&\qquad\text{if $\alpha_k+\beta_k\leq j\leq \alpha_{k+1}-1$ and $A_{j}\subset \mathbb{X}_{g,i}$ $(i=0,1)$},\\
\mathrm{Leb}\,(\widehat A_{j+2})&\geq \rho_2\mathrm{Leb}\,(\widehat A_{j})\\
&\qquad\text{if $\alpha_k+\beta_k+1\leq j\leq \alpha_{k+1}-2$ and $A_{j}\subset \mathbb{X}_{g,2}$.}
\end{split}
\end{equation}
Here we note that $A_{\alpha_{k+1}-1}$ is not contained in $\xx_{g,2}$.
If not, by Extra Assumptions $A_{\alpha_{k+1}}\subset g(\xx_{g,2})$ would be disjoint from $\xx_{g,0}\cup \xx_{g,1}$, a contradiction.
By \eqref{eqn_lamcs1}, one can suppose that $\rho_1>1$.
Now we consider the constant $\widehat\rho=\min\left\{\dfrac{\rho_0}3,\dfrac{\rho_1}3,\rho_2,1\right\}$.
By Definition \ref{describable}\,\eqref{D1}, $\lim\limits_{k\to \infty}\sum_{i=2}^k \gamma_i/\sum_{i=2}^k\beta_i=0$.
Let $\tau_k=\#\{j\in \mathbb{I}_k\,;\,v_j=0\}$.
Then the asymptotically 1-filling condition \eqref{eqn_1-filling} implies $\lim\limits_{k\to \infty}\sum_{i=2}^k \tau_i/\sum_{i=2}^k\beta_i=0$ 
and hence in particular $\frac12\sum_{i=2}^k \beta_i\geq \sum_{i=2}^k\tau_i$ for all sufficiently large $k$.
Since $\rho_1>1$ and $\widehat\rho\leq 1$, we have by \eqref{eqn_LebA} 
\begin{align*}
\mathrm{Leb}\,(\widehat A_{\alpha_{k+1}})&\geq \rho_1^{\,\sum_{i=2}^k(\beta_i-\tau_i)}
\widehat\rho^{\,\sum_{i=2}^k(\gamma_i-\gamma_i^{(3)}+\tau_i)}\mathrm{Leb}\,(A_{\alpha_2})\\
&\geq \rho_1^{\,\frac12\sum_{i=2}^k\beta_i}
\widehat\rho^{\,\sum_{i=2}^k(\gamma_i+\tau_i)}\mathrm{Leb}\,(A_{\alpha_2}).
\end{align*}
Here `$-\gamma_i^{(3)}$' arises from skipping the step 
$A_{\alpha_k+\beta_k+j} = g(A_{\alpha_k+\beta_k+j-1})$ in each case of (3).
It follows that 
\begin{align*}
\log \mathrm{Leb}\,(\widehat A_{\alpha_{k+1}})&\geq 
\frac12\sum_{i=2}^k\beta_i\log \rho_1+
\sum_{i=2}^k(\gamma_i+\tau_i)\log \widehat\rho+\log\mathrm{Leb}\,(A_{\alpha_2})\\
&=\frac12\sum_{i=2}^k\beta_i\left(\log \rho_1+\frac{2\sum_{i=2}^k(\gamma_i+\tau_i)}{\sum_{i=2}^k\beta_i}\log \widehat\rho\right)+\log\mathrm{Leb}\,(A_{\alpha_2})\to \infty
\end{align*}
as $k\to \infty$ and so $\lim\limits_{k\to \infty}\mathrm{Leb}\,(\widehat A_{\alpha_{k+1}})=\infty$, which contradicts that $\widehat A_{\alpha_{k+1}}$ is a subset of $I_{\varepsilon_0}^2$.
This completes the proof.
\end{proof}

\begin{proof}[Proof of Corollary \ref{cor_B}]
Let $f_0$ be a diffeomorphism satisfying all the conditions as above 
for the proof of Theorem \ref{thm_A} and such that $\Lambda_{f_0}$ is a wild blender-horseshoe.
Moreover we suppose that $f_0$ employs positive constants $\lambda_{\rm ss}, \lambda_{\rm cs0}, \lambda_{\rm cs1}$ and $\lambda_{\rm u}$ 
satisfying 
\begin{equation}\label{eqn_KNS}
\lambda_{\rm ss}<\lambda_{\rm cs0}<\frac12<\lambda_{\rm cs1}<1<
\lambda_{\rm cs0}+\lambda_{\rm cs1},\quad 2<\lambda_{\rm u},\quad \lambda_{\rm cs0}\lambda_{\rm cs1}\lambda_{\rm u}^2<1
\end{equation}
instead of \eqref{eqn_eigen_v}, which are 
the same as the conditions (3.1a) and (3.1b) in \cite{KNS}.
We recall that Theorem B in \cite{KNS} works under $C^r$-category with $2\leq r<\infty$, 
which is an assumption of Corollary \ref{cor_B}.
Hence any element $f$ of $\mathcal{U}_0$ is strongly pluripotent 
for $\Lambda_f^{({\rm mj})}$.
Since \eqref{eqn_KNS} implies \eqref{eqn_eigen_v},  
$f$ is not strongly pluripotent for $\Lambda_f$.
\end{proof}

\section*{Acknowledgement}
S.~Kiriki was supported by JSPS KAKENHI Grant Number 21K03332.
X.~Li was supported by NSFC Grant Numbers 11701199 and 12331005.
Y.~Nakano was supported by JSPS KAKENHI Grant  Number 23K03188 and JST PRESTO Grant Number JPMJPR25K8.
T.~Soma was supported by JSPS KAKENHI Grant Number 22K03342.

\end{document}